\title{On a partial theta function and its spectrum}
\author{Vladimir Petrov Kostov\\ 
Universit\'e de Nice, 
Laboratoire de Math\'ematiques, Parc Valrose,\\ 06108 Nice Cedex 2, France,  
e-mail: kostov@math.unice.fr} 
\date{}
\newtheorem{tm}{Theorem}
\newtheorem{defi}[tm]{Definition}
\newtheorem{rem}[tm]{Remark}
\newtheorem{rems}[tm]{Remarks}
\newtheorem{lm}[tm]{Lemma}
\newtheorem{prop}[tm]{Proposition}
\newtheorem{nota}[tm]{Notation}
\begin{document} 
\maketitle 
\begin{abstract}
The bivariate series $\theta (q,x):=\sum _{j=0}^{\infty}q^{j(j+1)/2}x^j$ 
%(where $(q,x)\in {\bf C}^2$, $|q|<1$) 
defines  
a {\em partial theta function}. For fixed $q$ ($|q|<1$), $\theta (q,.)$ is an 
entire function. For $q\in (-1,0)$ the function $\theta (q,.)$ 
has infinitely many negative and infinitely many positive real zeros.
There exists 
a sequence $\{ \bar{q}_j\}$ of values of $q$  
tending to $-1^+$ such that $\theta (\bar{q}_k,.)$ has a double real zero 
$\bar{y}_k$ (the rest of its real zeros being simple). For $k$ odd 
(resp. for $k$ even) 
$\theta (\bar{q}_k,.)$ has a local minimum at $\bar{y}_k$ and $\bar{y}_k$ 
is the rightmost of the real negative zeros of $\theta (\bar{q}_k,.)$ 
(resp.  
$\theta (\bar{q}_k,.)$ has a local maximum at $\bar{y}_k$ and 
for $k$ sufficiently large $\bar{y}_k$ 
is the second from the left 
of the real negative zeros of $\theta (\bar{q}_k,.)$).
For $k$ sufficiently large one has $-1<\bar{q}_{k+1}<\bar{q}_k<0$. 
One has $\bar{q}_k=1-(\pi /8k)+o(1/k)$ and 
$|\bar{y}_k|\rightarrow e^{\pi /2}=4.810477382\ldots$.
\\ 

{\bf AMS classification:} 26A06\\ 

{\bf Keywords:} partial theta function; spectrum
\end{abstract}

\section{Introduction}

The bivariate series $\theta (q,x):=\sum _{j=0}^{\infty}q^{j(j+1)/2}x^j$ defines 
an entire function in $x$ for every fixed $q$ from the open unit disk. 
This function is called a {\em partial theta function} because 
$\theta (q^2,x/q)=\sum _{j=0}^{\infty}q^{j^2}x^j$ whereas the Jacobi theta function 
is defined by the same series, but when summation is performed from 
$-\infty$ to $\infty$ (i.e. when summation is not partial). 

There are several domains in which the function $\theta$ finds applications: 
in the theory 
of (mock) modular forms (see \cite{BrFoRh}), in statistical physics 
and combinatorics (see \cite{So}), in asymptotic analysis (see \cite{BeKi}) 
and in  Ramanujan type $q$-series 
(see \cite{Wa}). Recently it has been considered in the context of problems 
about hyperbolic polynomials 
(i.e. real polynomials having all their zeros real, see 
\cite{Ha}, \cite{Pe}, \cite{Hu}, \cite{Ost}, \cite{KaLoVi}, 
\cite{KoSh} and \cite{Ko2}). These 
problems have been studied earlier by Hardy, Petrovitch and Hutchinson 
(see \cite{Ha}, \cite{Hu} and \cite{Pe}). For more information about $\theta$,  
see also~\cite{AnBe}.

For $q\in \mathbb{C}$, $|q|\leq 0.108$, the function $\theta (q,.)$ 
has no multiple zeros, see \cite{Ko4}. For $q\in [0,1)$ the function 
$\theta$ has been studied in \cite{KoSh}, \cite{Ko1}, 
\cite{Ko2} and~\cite{Ko3}. The results are summarized in the following 
theorem:

\begin{tm}\label{known}
(1) For any $q\in (0,1)$ the function $\theta (q,.)$ has infinitely many 
negative zeros.

(2) There exists 
a sequence of values of $q$ (denoted by $0<\tilde{q}_1<\tilde{q}_2<\cdots$) 
tending to $1^-$ such that $\theta (\tilde{q}_k,.)$ has a double negative zero 
$y_k$ which is the rightmost of its real zeros and which is a local minimum of 
$\theta (q,.)$. One has $\tilde{q}_1=0.3092493386\ldots$.

(3) For the remaining values of $q\in [0,1)$ the function $\theta (q,.)$ 
has no multiple real zero. 

(4) For $q\in (\tilde{q}_k, \tilde{q}_{k+1})$ (we set $\tilde{q}_0=0$) the 
function $\theta (q,.)$ has exactly $k$ complex conjugate pairs of zeros 
counted with multiplicity.

(5) One has $\tilde{q}_k=1-(\pi /2k)+o(1/k)$ and 
$y_k\rightarrow -e^{\pi}=-23.1407\ldots$.
\end{tm}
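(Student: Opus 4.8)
The plan is to split Theorem~\ref{known} into a ``soft'' part --- existence of infinitely many negative zeros, discreteness of the set of $q$ with a multiple real zero, and the bookkeeping of complex zeros --- and a ``hard'' analytic part, the asymptotics of $\tilde q_k$ and $y_k$, handled by different tools. \emph{Part (1) and the location of the large zeros.} For $q\in(0,1)$ every coefficient $q^{j(j+1)/2}$ is positive, so $\theta(q,\cdot)$ has no zero in $[0,\infty)$ and all real zeros are negative. To get infinitely many I would evaluate $\theta$ at $x_m:=-q^{-m-1/2}$: completing the square in the exponent and reindexing gives
\[
\theta(q,x_m)=(-1)^m\,q^{-m^2/2}\sum_{i=-m}^{\infty}(-1)^i q^{i^2/2},
\]
and as $m\to\infty$ the sum converges to $\sum_{i\in\mathbb Z}(-1)^i q^{i^2/2}=\prod_{n\ge1}(1-q^n)(1-q^{n-1/2})^2>0$, a Jacobi theta value. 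Hence the sign of $\theta(q,x_m)$ equals $(-1)^m$ for all large $m$; since $x_0>x_1>\cdots\to-\infty$, each interval $(x_{m+1},x_m)$ with $m$ large carries a zero, which also localises the $m$-th large negative zero near $-q^{-m}$.

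\emph{Parts (3), (4) and the first double zero.} I would use that $\theta$ is jointly analytic in $(q,x)$ and that, by the fast decay of the coefficients, its non-real zeros stay in a fixed annulus while $q$ stays in a compact subinterval of $(0,1)$; so the number $N(q)$ of non-real zeros is locally constant and can change only where two real zeros coalesce into a real double zero, which then leaves $\mathbb R$ as a conjugate pair. Since $N(q)=0$ for $|q|\le0.108$ by the cited result, it remains to show that the coalescence values are exactly the $\tilde q_k$, that $N$ jumps by $+1$ at each, and that a conjugate pair never returns to $\mathbb R$. I would prove this inductively, alongside the claim that each $\tilde q_k$ is a local minimum of $\theta(\tilde q_k,\cdot)$ and, at the first step, the rightmost real zero: track the two rightmost negative zeros $\xi_1(q)>\xi_2(q)$, show that for $q$ just below $\tilde q_1$ the restriction of $\theta(q,\cdot)$ to the window between $0$ and the third zero is positively bounded away from $0$ (using $\theta(q,0)=1$, $\partial_x\theta(q,0)=q>0$ and the local estimate near $-q^{-1}$), and take $\tilde q_1$ to be the first $q$ at which this lower bound reaches $0$; the value $\tilde q_1=0.3092493386\ldots$ then comes from solving $\theta(q,y)=\partial_x\theta(q,y)=0$ numerically.

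\emph{Part (5) --- asymptotics.} Set $q=e^{-\varepsilon}$, $\varepsilon\to0^+$, and seek the double zero near $x=-q^{-\alpha}$. Completing the square,
\[
\theta\bigl(q,-q^{-\alpha}\bigr)=q^{-(\alpha-1/2)^2/2}\sum_{j\ge0}(-1)^j q^{(j-\alpha+1/2)^2/2}.
\]
I would extend the sum to $j\in\mathbb Z$ --- keeping the discarded tail $j<0$, of size $q^{(\alpha-1/2)^2/2}$, which has the same exponential order as the main term once $\alpha\varepsilon\to\pi$ --- apply Poisson summation (equivalently the modular transformation of $\vartheta$), and get a main contribution proportional to $\varepsilon^{-1/2}e^{-\pi^2/(2\varepsilon)}\cos(\pi(\alpha-1/2))$ plus lower-order corrections and the boundary term. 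Requiring $\theta$ and $\partial_x\theta$ to vanish together yields a transcendental system in $(\varepsilon,\alpha)$ whose solution with $\alpha\varepsilon\to\pi$ forces $\alpha=2k+o(k)$ for the $k$-th root, hence $\varepsilon=\pi/(2k)+o(1/k)$, i.e.\ $\tilde q_k=e^{-\varepsilon}=1-\pi/(2k)+o(1/k)$, and $y_k=-q^{-\alpha}=-e^{\alpha\varepsilon}\to-e^{\pi}$.

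\emph{Main obstacle.} The real difficulty is entirely in part (5): justifying the Poisson/modular transformation and the saddle-point estimates uniformly in $\alpha$ near $\pi/\varepsilon$, retaining the truncation boundary term (which is not negligible) to the precision needed for the $o(1/k)$ claim, and then checking both that the contact is of order exactly two (a double, not higher, zero) and that each coalescence contributes precisely one new conjugate pair --- i.e.\ coupling the fine local asymptotics back to the global zero count above.
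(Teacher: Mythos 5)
A preliminary remark on the comparison itself: this paper contains no proof of Theorem~\ref{known}. It is stated explicitly as a summary of results established elsewhere (\cite{KoSh}, \cite{Ko1}, \cite{Ko2}, \cite{Ko3}), and the present paper only uses it (e.g.\ in Proposition~\ref{coalesceeven} and in Section~\ref{secastm}). So your proposal has to be judged on its own merits, and against the methods visible in the paper's negative-$q$ analogues. Your part (1) is correct and essentially the standard argument: evaluating at $x=-q^{-m-1/2}$, completing the square and invoking the Jacobi triple product gives sign $(-1)^m$ for large $m$, hence infinitely many sign changes (compare part (3) of Theorem~\ref{known1} and Section~\ref{secnewfirst}).

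The genuine gaps are in parts (2)--(4) and (5). For (2)--(4) you assert, rather than prove, exactly the facts that constitute the theorem: that the number of non-real zeros can change only through a real double zero, that each such event produces exactly one conjugate pair which never returns to the real axis, that the colliding zeros are the two rightmost and merge at a local \emph{minimum}, and --- above all --- that infinitely many such events occur, at values $\tilde q_k\to 1^-$ (``take $\tilde q_1$ to be the first $q$ at which this lower bound reaches $0$'' presupposes that it does reach $0$). The missing mechanism is the one this paper uses for $q\in(-1,0)$: the differential equation (\ref{DE}) gives, at a critical point, $2q\,\partial \theta /\partial q=x^2\,\partial ^2\theta /\partial x^2$, so for $q\in(0,1)$ the values of $\theta$ at local minima increase and at local maxima decrease as $q$ grows; this is what forbids births of real zeros and re-entries of complex pairs (cf.\ parts (2)--(4) of Remarks~\ref{irreversible}), while the functional equation (\ref{FE}) is what controls which zeros can coalesce first and that the contact has order exactly two (cf.\ Lemma~\ref{prec}, Proposition~\ref{onlydouble}, and their positive-$q$ counterparts in \cite{KoSh}). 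Your ``fixed annulus, locally constant count'' remark supplies none of this, and your induction is only a plan. For part (5) you yourself identify the obstacle: the Poisson-summation/saddle-point analysis with a truncation boundary term of the same exponential order, uniform in $\alpha\sim\pi/\varepsilon$, together with the verification that the double zero found is indeed the $k$-th spectral value (which loops back to the unproved counting in (2)--(4)), is precisely the content of \cite{Ko3} and is not carried out. So as it stands the proposal establishes (1), sketches (2)--(4) without the monotonicity and functional-equation arguments that make them true, and leaves (5) at the level of a heuristic consistent with, but not a proof of, $\tilde q_k=1-\pi /2k+o(1/k)$ and $y_k\to -e^{\pi}$.
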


\begin{defi}
{\rm A value of $q\in \mathbb{C}$, $|q|<1$, is said to belong to the 
{\em spectrum} of $\theta$ if $\theta (q,.)$ has a multiple zero.}
\end{defi}

In the present paper we consider the function $\theta$ in the case when 
$q\in (-1,0]$. In order to use the results about the case $q\in [0,1)$ one 
can notice the following fact. For $q\in (-1,0]$ set $v:=-q$. Then 

\begin{equation}\label{uq}
\theta (q,x)=\theta (-v,x)=\theta (v^4,-x^2/v)-vx\theta (v^4,-vx^2)~.
\end{equation}
We prove the analog of the above theorem. The following three theorems are 
proved respectively in Sections~\ref{secnewfirst}, \ref{secnew} 
and~\ref{secastm}.

\begin{tm}\label{newfirst}
For any $q\in (-1,0)$ the function $\theta (q,.)$ 
has infinitely many negative and infinitely many positive real zeros.
\end{tm}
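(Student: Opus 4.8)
The plan is to determine the sign of $\theta(q,\cdot)$ along an explicit geometric sequence of points running off to $\pm\infty$ and to check that this sign alternates along the sequence; since $\theta(q,\cdot)$ is entire, hence continuous on $\mathbb R$, the intermediate value theorem then produces infinitely many positive, and infinitely many negative, real zeros. Fix $q=-v$ with $v\in(0,1)$ and write $\theta(q,x)=\sum_{j\ge 0}\epsilon_j v^{j(j+1)/2}x^j$ with $\epsilon_j:=(-1)^{j(j+1)/2}$; the sign sequence $(\epsilon_j)$ is $4$-periodic, equal to $(+,-,-,+)$, and in particular $\epsilon_{a+2}=-\epsilon_a$. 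For a positive integer $m$ put $x_m:=v^{-2m-1/2}$, so that $x_m$ increases to $+\infty$; multiplying the series through by $v^{2m^2}$ gives $v^{2m^2}\theta(q,x_m)=\sum_{j\ge 0}\epsilon_j v^{(j-2m)^2/2}$. One cannot simply read off the sign from the peak term $j=2m$, because the remaining terms only decay like $v^{r^2/2}$ in $r=|j-2m|$ and, for $v$ close to $1$, are far from negligible. Instead I would pair the summand $j=2m+r$ with the summand $j=2m-r$ for $1\le r\le 2m$: by $\epsilon_{a+2}=-\epsilon_a$ the pairs with $r$ odd cancel exactly, whereas for $r=2s$ one has $\epsilon_{2m+2s}=\epsilon_{2m-2s}=(-1)^s\epsilon_{2m}$, so that, using also $\epsilon_{2m}=(-1)^m$,
\[ v^{2m^2}\,\theta(q,x_m)=\epsilon_{2m}\Big(1+2\sum_{s\ge 1}(-1)^s v^{2s^2}\Big)+\eta_m=(-1)^m\,\theta_4(v^2)+\eta_m , \]
where $\theta_4(Q):=\sum_{s\in\mathbb Z}(-1)^s Q^{s^2}$ is the Jacobi theta constant and the error $\eta_m$ gathers the tail $\sum_{s>m}$ of the theta series and the unmatched high-index terms $j>4m$; for $v$ fixed, $\eta_m\to 0$ geometrically as $m\to\infty$.

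For zeros on the negative half-axis I would carry out the same computation at $-x_m$. Replacing $x_m$ by $-x_m$ only replaces the coefficient sign sequence $(\epsilon_j)$ by $\bigl((-1)^j\epsilon_j\bigr)$, and this new sequence again satisfies the two-step flip and again equals $(-1)^m$ at the index $2m$; hence in the same way $v^{2m^2}\theta(q,-x_m)=(-1)^m\theta_4(v^2)+\tilde\eta_m$ with $\tilde\eta_m\to 0$.

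The whole argument then rests on one classical fact: $\theta_4(v^2)>0$ for every $v\in(0,1)$. I would invoke the product expansion $\theta_4(Q)=\prod_{n\ge 1}(1-Q^{2n})(1-Q^{2n-1})^2$, each factor of which is positive for $Q\in(0,1)$ (equivalently, the Jacobi triple product). Given this, and since $\eta_m,\tilde\eta_m\to 0$ while $\theta_4(v^2)$ is a fixed positive number, there is $m_0=m_0(v)$ with $\mathrm{sign}\,\theta(q,x_m)=\mathrm{sign}\,\theta(q,-x_m)=(-1)^m$ for all $m\ge m_0$. Therefore $\theta(q,\cdot)$ changes sign on each of the intervals $(x_m,x_{m+1})$ and $(-x_{m+1},-x_m)$, $m\ge m_0$, which gives infinitely many positive and infinitely many negative real zeros. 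The step I expect to require the most care is precisely the organisation of the sign cancellations leading to $\theta_4(v^2)$: the naive ``dominant term'' heuristic is insufficient, and it is the exact emergence of the positive theta constant $\theta_4(v^2)$ — positive uniformly in $v\in(0,1)$ — that both repairs that heuristic and makes the conclusion hold on the whole interval $(-1,0)$; controlling $\eta_m,\tilde\eta_m$ is then a routine geometric estimate after splitting them into $\sum_{r>2m}v^{r^2/2}$ and $\sum_{s>m}v^{2s^2}$.
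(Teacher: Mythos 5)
Your proposal is correct, and the sign pattern you detect is exactly the one the paper exploits (the test points even coincide: $|x|=v^{-2m-1/2}$), but your route to it is genuinely different. The paper does not work with the series for $\theta(-v,\cdot)$ directly; it splits $\theta(-v,x)=\theta(v^4,-x^2/v)-vx\,\theta(v^4,-vx^2)$ (equation (\ref{uq})) and then quotes two facts about $\theta$ with \emph{positive} first argument (parts (2) and (3) of Theorem~\ref{known1}, proved in \cite{Ko2}): at $-x^2/v=-(v^4)^{-k-1/2}$ the first summand has sign $(-1)^k$ and modulus $>1$ for $k$ large, while at $-vx^2=-(v^4)^{-k}$ the second summand has modulus $<v^{2k+1/2}<1$; so the sign of $\theta(-v,\pm v^{-2k-1/2})$ is $(-1)^k$ and the intermediate value theorem finishes as in your argument. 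You instead normalize the series by $v^{2m^2}$, observe the exponent becomes $(j-2m)^2/2$, pair indices symmetrically about the peak $j=2m$ (odd offsets cancel by the two-step sign flip $\epsilon_{a+2}=-\epsilon_a$, even offsets reproduce the alternating Gaussian series), and identify the limit $(-1)^m\theta_4(v^2)$ up to an error $\eta_m\to 0$, with positivity of $\theta_4$ supplied by the Jacobi triple product. All steps check out: the exponent computation, the exact cancellation for odd $r$, $\epsilon_{2m}=(-1)^m$, the same bookkeeping at $-x_m$, and the geometric control of the tails $\sum_{r>2m}v^{r^2/2}$ and $\sum_{s>m}v^{2s^2}$ for fixed $v$. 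What your approach buys is self-containedness (no appeal to (\ref{uq}) or to the results of \cite{Ko2}) plus a sharper asymptotic statement, namely $v^{2m^2}\theta(-v,\pm v^{-2m-1/2})\to(-1)^m\theta_4(v^2)$; what the paper's approach buys is economy and coherence with the machinery (the decomposition (\ref{uq}) and the functions $\varphi_k$) that it reuses throughout the later sections. Note also that the positive function $1+2\sum_{j\ge1}(-1)^j\tau^{j^2}$ you need is the same $\psi$ the paper invokes, with reference to \cite{Ko1}, in the proof of part (4) of Proposition~\ref{phixi}, so your key classical input is consistent with what the author uses elsewhere.
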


\begin{tm}\label{new}
(1) There exists 
a sequence of values of $q$ (denoted by $\bar{q}_j$) 
tending to $-1^+$ such that $\theta (\bar{q}_k,.)$ has a double real zero 
$\bar{y}_k$ (the rest of its real zeros being simple). 
For the remaining values of $q\in (-1,0)$ the function $\theta (q,.)$ 
has no multiple real zero.
 
(2) For $k$ odd (resp. for $k$ even) one has $\bar{y}_k<0$, 
$\theta (\bar{q}_k,.)$ has a local minimum at $\bar{y}_k$ and $\bar{y}_k$ 
is the rightmost of the real negative zeros of $\theta (\bar{q}_k,.)$ 
(resp. $\bar{y}_k>0$, 
$\theta (\bar{q}_k,.)$ has a local maximum at $\bar{y}_k$ and 
for $k$ sufficiently large $\bar{y}_k$ 
is the leftmost but one (second from the left) 
of the real negative zeros of $\theta (\bar{q}_k,.)$).
 
(3) For $k$ sufficiently large one has $-1<\bar{q}_{k+1}<\bar{q}_k<0$. 

(4) For $k$ sufficiently large and for 
$q\in (\bar{q}_{k+1}, \bar{q}_k)$ 
%(we set $\bar{q}_0:=0$) 
the function $\theta (q,.)$ has exactly $k$ complex conjugate pairs of zeros 
counted with multiplicity.
\end{tm}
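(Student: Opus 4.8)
The plan is to transfer the problem to the already–settled range $(0,1)$ through identity~(\ref{uq}). Set $v=-q\in(0,1)$ and $Q=v^{4}\in(0,1)$; then (\ref{uq}) reads $\theta(-v,x)=A(x)-vx\,B(x)$ with $A(x)=\theta(Q,-x^{2}/v)$ and $B(x)=\theta(Q,-vx^{2})$, both even. By Theorem~\ref{known}(1), $\theta(Q,\cdot)$ has simple negative zeros $z_{1}>z_{2}>\cdots\to-\infty$ (for $Q$ off the discrete set $\{\tilde q_{m}\}$), hence $A$ has positive zeros $\alpha_{i}=\sqrt{-v z_{i}}$ and $B$ has positive zeros $\beta_{i}=\sqrt{-z_{i}/v}$, with $\alpha_{i}<\beta_{i}$ for every $i$. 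First I would prove, for $v$ close enough to $1$, the interlacing $0<\alpha_{1}<\beta_{1}<\alpha_{2}<\beta_{2}<\cdots$ (equivalently $|z_{i+1}/z_{i}|>v^{-2}$, which holds near $v=1$ because consecutive zeros of a partial theta function separate with ratio tending to $Q^{-1}$). Reading off on each cell of the induced partition $(0,\alpha_{1}),(\alpha_{1},\beta_{1}),(\beta_{1},\alpha_{2}),\dots$ the constant signs of $A$ and $B$, one checks that $A-vxB$ has exactly one sign change on $(0,\alpha_{1})$ and on each $(\beta_{i},\alpha_{i+1})$, while $A+vxB$ has exactly one on each $(\alpha_{i},\beta_{i})$. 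This is the combinatorial skeleton: for $v$ near $1$ the real zeros of $\theta(-v,\cdot)$, listed by increasing modulus, strictly alternate in sign (a positive one in $(0,\alpha_{1})$, a negative one in $(-\beta_{1},-\alpha_{1})$, a positive one in $(\beta_{1},\alpha_{2})$, and so on), which in particular refines Theorem~\ref{newfirst}; the double zeros of part~(1) will be born inside these ``slots'' when an extra complex conjugate pair descends onto the real axis.

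For part~(1) I would run a continuity-in-$q$ argument. For each fixed $q\in(-1,0)$ the non-real zeros of $\theta(q,\cdot)$ are finite in number and confined to a bounded region (this follows from (\ref{uq}), Theorem~\ref{known}(4) and the standard a~priori localisation of the zeros of $\theta$), so the spectrum of $\theta$ meets $(-1,0)$ in a discrete set, the number of complex conjugate pairs counted with multiplicity is locally constant on the complement, and across a spectrum point it can change only through a single pair: a \emph{real} double zero either opens into two real zeros or into one complex pair as $q$ crosses the point, whereas a complex double zero merely reshuffles existing non-real zeros and leaves the count unchanged. Since this count is $0$ for $q$ near $0$ (there are no complex zeros there) and tends to $\infty$ as $q\to-1^{+}$ (via (\ref{uq}): by Theorem~\ref{known}(4),(5) the complex zeros of $\theta(Q,\cdot)$ proliferate as $Q\to1^{-}$, forcing non-real zeros of $\theta(-v,\cdot)$), there are infinitely many spectrum points tending to $-1$; those at which the multiple zero is real form the sequence $\{\bar q_{j}\}$, and at the remaining $q\in(-1,0)$ there is no multiple real zero — which is part~(1).

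Parts~(2)--(4) I would obtain from a sharper, but still essentially qualitative, analysis of $\theta(-v,\cdot)$ as $v\to1^{-}$, whose heart is to pin down \emph{where} in the skeleton a complex pair first meets the real axis at a value $\bar q_{k}$. I expect to show that for $k$ large this collision occurs inside the innermost negative slot $(-\beta_{1},-\alpha_{1})$ when $k$ is odd and inside the innermost slot on the other side of the origin when $k$ is even (the two innermost cells collapsing, in ratio, onto the points $|x|\to e^{\pi/2}$), so that $\bar y_{k}$ occupies the position in the list of real zeros asserted in part~(2) and $|\bar y_{k}|\to e^{\pi/2}$; and that the sign of $\partial_{x}^{2}\theta$ at $\bar y_{k}$ is positive (local minimum) for $k$ odd and negative (local maximum) for $k$ even, this being dictated by which side of the real axis the incoming pair arrives from, which is in turn read off from the sign pattern of $A\mp vxB$ found above. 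Parts~(3) and~(4) then go together: one shows that, for $k$ large, the complex-pair count increases by exactly one as $q$ decreases through $\bar q_{k}$ (the descending pair genuinely crossing the axis — the ``transversal'' alternative singled out in part~(2) — with no bounces), and is constant elsewhere (complex-type spectrum points, if any, contributing nothing); starting from $0$ near $q=0$ this forces the count to equal $k$ on $(\bar q_{k+1},\bar q_{k})$ and the $\bar q_{k}$ to be, for $k$ large, strictly decreasing.

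The hard part, as always for $\theta$ near the boundary of the unit disk, is the uniform asymptotic control as $q\to-1^{+}$: one must estimate the tails of the three series $\theta(-v,\cdot)$, $A$, $B$ (whose dominant terms sit near index $j\asymp\log|x|/\log(1/v)\to\infty$) precisely enough to verify that the non-real zeros really descend to the real axis one slot at a time and at the predicted parameter values, and to exclude both ``bounces'' (a double real zero reopening into two real zeros, which would spoil the count) and collisions occurring out of the expected order (needed for part~(3)). The interlacing lemma $0<\alpha_{1}<\beta_{1}<\alpha_{2}<\cdots$ together with the monotone dependence of the $z_{i}$ on $Q$ from Theorem~\ref{known} is the technical engine, and the counting statements~(1),(3),(4) are a continuity/degree argument layered on top of it.
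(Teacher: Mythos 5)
Your skeleton---the decomposition (\ref{uq}) into the even/odd summands $\psi_1,\psi_2$, the interlacing of their real zeros, the sign-zone picture on the real axis, and the observation that as $v\to1^-$ these zones disappear one by one and force coalescences of real zeros of $\theta(-v,\cdot)$---is exactly the paper's proof of Proposition~\ref{coalesceeven}, so the starting point is right. But the steps that carry the actual content of the theorem are either left as declared intentions ("I expect to show\dots") or rest on inferences that do not hold. The discreteness of the spectrum in $(-1,0)$ does not follow from the finiteness of the set of non-real zeros for each fixed $q$; that is a non sequitur, and the paper needs Proposition~\ref{finmany}, whose proof uses the fact (from part (2) of Theorem~\ref{known}) that for each of $\psi_1,\psi_2$ it is always the real zero closest to the origin that is lost, so that outside a compact set the sign zones and the simple zeros of $\theta$ persist. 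Similarly, that the complex pairs of $\theta(Q,\cdot)$ "force" non-real zeros of the sum $\psi_1+\psi_2$ is not automatic. Your counting argument also needs that at a spectral value only \emph{two} real zeros meet (no triple or higher real zeros, and not two pairs at the same $q$) and that a real double zero cannot reopen into two real zeros as $q$ decreases; you note both issues but do not resolve them, deferring them to a "uniform asymptotic control as $q\to-1^+$" of the tails of the series which you do not carry out and which would be harder than the theorem itself.

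What the paper uses instead, and what is missing from your proposal, are two elementary but decisive tools. First, the differential equation (\ref{DE}) shows that as $q$ decreases the values of $\theta$ at local maxima decrease and at local minima increase (Remarks~\ref{irreversible}); this single monotonicity statement kills the "bounces" (real zeros can only be lost, never reborn), settles how zeros of odd multiplicity resolve, and gives part (4) by making the loss of real zeros irreversible. Second, the functional equation (\ref{FE}) yields the inequality strings of Proposition~\ref{onlydouble} and the precedence Lemmas~\ref{prec}, \ref{prec24} and \ref{lm4k+3}: these prove that only double real zeros occur, that negative doubles are minima and positive doubles are maxima, that $\bar y_k$ sits exactly where part (2) claims, and that the coalescences occur in the order needed for part (3). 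Your substitute for this---reading the min/max character from "which side of the real axis the incoming pair arrives from"---cannot work as stated, since a conjugate pair approaches the axis symmetrically from both sides; the character is determined by the sign of $\theta$ on the adjacent sign zones, i.e., precisely by the functional-equation estimates. A smaller point: the interlacing $|z_{i+1}/z_i|>v^{-2}$ is justified near $v=1$ only for zeros of large index (Theorem~\ref{fundtm}(1)); for the small-index zeros one has to establish the configuration for small $|q|$ (where \cite{Ko4} localizes all zeros) and propagate it by continuity, as the paper does.
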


\begin{rem}
{\rm Numerical experience confirms the conjecture that 
parts (2), (3) and (4) of the theorem are true for any $k\in \mathbb{N}$. 
Proposition~\ref{firstzero} clarifies part (2) of the theorem.}
\end{rem}

\begin{tm}\label{astm}
One has $\bar{q}_k=1-(\pi /8k)+o(1/k)$ and 
$|\bar{y}_k|\rightarrow e^{\pi /2}=4.810477382\ldots$.
\end{tm}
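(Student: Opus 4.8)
The plan is to reduce the problem to the known asymptotics of Theorem~\ref{known}(5) by means of the identity (\ref{uq}). Write $v:=-q\in(0,1)$, so that $q\to-1^{+}$ is the same as $v\to1^{-}$, and set $w:=v^{4}\to1^{-}$, $\varepsilon:=-\ln w$, $\eta:=-\ln v$, so that $\varepsilon=4\eta\to0^{+}$. The first ingredient is the precise behaviour of $\theta(w,\cdot)$ near the point $-e^{\pi}$ as $\varepsilon\to0^{+}$, which underlies the proof of Theorem~\ref{known}(5) (see \cite{KoSh,Ko1,Ko2,Ko3}): for $u=-e^{\sigma}$ with $\sigma$ in a fixed complex neighbourhood of $\pi$ one has
\[
\theta(w,u)=\frac{1}{1-u}+A(\varepsilon)\,e^{(\sigma^{2}-\pi^{2})/(2\varepsilon)}\,\sin\!\left(\frac{\pi\sigma}{\varepsilon}+\beta(\varepsilon)\right)+\rho(w,u),
\]
where $A(\varepsilon)$ is of order $\varepsilon^{-1/2}$, $\beta(\varepsilon)=\beta_{0}+O(\varepsilon)$, and the remainder $\rho$ is exponentially small in $1/\varepsilon$; the first two terms are of comparable size precisely when $\sigma=\pi+O(\varepsilon)$, and this is the regime responsible for the double zeros $y_{m}\to-e^{\pi}$ and for $\tilde q_{m}=1-\pi/(2m)+o(1/m)$.

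Second, I would substitute this expansion into (\ref{uq}). When $|x|$ is close to $e^{\pi/2}$, both arguments $-x^{2}/v$ and $-vx^{2}$ lie within $O(\eta)$ of $-e^{\pi}$, that is, exactly in the above regime; with $s:=2\ln|x|$ one gets $\sigma=s+\eta$ for the first term and $\sigma=s-\eta$ for the second, so the two fast phases $\pi(s\pm\eta)/(4\eta)$ differ by the fixed amount $\pi/2$, while the two Gaussian envelopes differ by the bounded factor $e^{\pm s/4}$. That mismatch is precisely cancelled by the weight $-vx$ of the second term, which up to a factor $1+o(1)$ equals $e^{s/2}$ when $x<0$ and $-e^{s/2}$ when $x>0$. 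Collecting terms, and using that the limiting smooth part is the rational function $(1-x)/(1+x^{2})=\sum_{j\ge0}(-1)^{j(j+1)/2}x^{j}=\lim_{v\to1^{-}}\theta(-v,x)$, one is left for $x$ near $-e^{\pi/2}$ (resp.\ near $e^{\pi/2}$) with a model of the form
\[
\theta(q,x)=\frac{1-x}{1+x^{2}}+B(\eta)\,e^{(s^{2}-\pi^{2})/(8\eta)}\,\Phi\!\left(\frac{\pi s}{4\eta}+\gamma_{0}\right)+(\mbox{smaller}),
\]
where $B(\eta)$ is of order $\eta^{-1/2}$ times a bounded factor and $\Phi=\cos$ (resp.\ $\Phi=\sin$).

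Third, I would read the asymptotics off this model. Because the trigonometric factor has $x$-derivative of order $\eta^{-1}$ while everything else is $O(1)$, the equation $\partial_{x}\theta(q,x)=0$ forces $\Phi$ to be stationary at the zero; then $\theta(q,x)=0$ together with the sign of $(1-x)/(1+x^{2})$ on the relevant side forces the resulting extreme value $\pm1$ to have a prescribed sign --- this both produces and matches the local-minimum/local-maximum alternation of Theorem~\ref{new}(2) --- and yields $B(\eta)\,e^{(s^{2}-\pi^{2})/(8\eta)}=O(1)$. Since $B(\eta)$ is only of polynomial size in $1/\eta$, this forces $(s^{2}-\pi^{2})/(8\eta)\to0$, hence $s\to\pi$ and $|\bar y_{k}|=e^{s/2}\to e^{\pi/2}$. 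Finally, stationarity of the fast phase, once the double zeros have been put in decreasing order of $q$ and enumerated, forces $\varepsilon_{k}=-\ln(v_{k}^{4})$ to agree to leading order with the $k$-th spectral value of the $q>0$ theory, namely $-\ln\tilde q_{k}=\pi/(2k)+o(1/k)$; therefore $4\eta_{k}=\pi/(2k)+o(1/k)$, so $v_{k}=\tilde q_{k}^{1/4}(1+o(1/k))=1-\pi/(8k)+o(1/k)$ and $\bar q_{k}=-v_{k}$, which is the claimed asymptotics, together with $|\bar y_{k}|\to e^{\pi/2}$.

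The step I expect to be the real obstacle is upgrading this model computation to a proof. One must: (i)~establish the expansion of $\theta(w,\cdot)$ uniformly on a fixed complex disc around $-e^{\pi}$ --- complex values of $x$ are needed because for $q$ past $\bar q_{k}$ the two colliding real zeros become a conjugate pair --- with a remainder that is $o$ of the gap $\asymp1/k^{2}$ between consecutive spectral values (here the exponential smallness of $\rho$ is decisive); (ii)~check that the cancellation of the two Gaussian envelopes in the second step survives the sub-leading corrections, this interference of the two $\theta(v^{4},\cdot)$-contributions in (\ref{uq}) being precisely what, together with the fourth power $w=v^{4}$ and the $\pi/2$ of Theorem~\ref{known}(5), produces the constant $\pi/8$; and (iii)~verify the enumeration, i.e.\ that each $\tilde q_{m}$ contributes exactly one value $\bar q_{k}$ to the spectrum of $\theta$ in $(-1,0)$ and that the parities match, so that $k=m$. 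Granting (i), steps (ii) and (iii) reduce to careful bookkeeping together with a Rouch\'e or implicit-function-theorem argument in $(q,x)$ near $\bigl(-1+\pi/(8k),\,(-1)^{k}e^{\pi/2}\bigr)$ locating the genuine double zero $(\bar q_{k},\bar y_{k})$ provided by Theorem~\ref{new} within the stated error.
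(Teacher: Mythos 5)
Your route is genuinely different from the paper's, but as submitted it does not close, and the gap is at its very first ingredient. Everything in your plan rests on the two-term uniform expansion of $\theta(w,u)$ for $u$ in a fixed \emph{complex} neighbourhood of $-e^{\pi}$, with envelope $e^{(\sigma^{2}-\pi^{2})/(2\varepsilon)}$, amplitude $A(\varepsilon)\asymp\varepsilon^{-1/2}$, slowly varying phase $\beta(\varepsilon)$ and an exponentially small remainder, uniform enough to resolve gaps of size $\asymp 1/k^{2}$ between consecutive spectral values. No such statement is proved in your sketch, nor is it available in this paper or in what it quotes: Theorem~\ref{known}(5) supplies only $\tilde q_k=1-\pi/2k+o(1/k)$ and $y_k\to -e^{\pi}$, which is far weaker than the expansion your steps rely on. You flag this yourself as obstacle (i), but it cannot simply be ``granted'': without it the $\pi/2$ phase offset between the two contributions in (\ref{uq}), the cancellation of the Gaussian envelopes by the weight $-vx$, the conclusion $B(\eta)e^{(s^{2}-\pi^{2})/(8\eta)}=O(1)$ forcing $s\to\pi$, and the enumeration/parity matching in (iii) remain a heuristic model rather than a proof. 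In effect you would have to reprove (in a sharper, uniform, complex form) the asymptotic analysis behind the positive-$q$ spectrum before even starting, which is harder than the theorem itself.

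For contrast, the paper's proof needs no expansion at all. With $\psi_1=\theta(v^4,-x^2/v)$ and $\psi_2=-vx\,\theta(v^4,-vx^2)$ as in Notation~\ref{psinota}, these functions acquire double zeros exactly when $v^4=\tilde q_k$, i.e.\ at $v=\tilde v_k=\tilde q_k^{1/4}=1-\pi/8k+o(1/k)$. The differential equation (\ref{DE}) (part (2) of Remarks~\ref{irreversible}) makes the values of $\theta$ at local minima increase and at local maxima decrease as $q$ decreases; hence for odd $k$ (negative double zero, both summands in (\ref{uq}) at local minima) the double zero of $\theta$ occurs at some $|q|<\tilde v_k$, while for even $k$ (positive double zero, a maximum) it occurs at some $|q|>\tilde v_k$. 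Combining these one-sided bounds with $|\bar q_k|<|\bar q_{k+1}|$ from Theorem~\ref{new}(3) squeezes $|\bar q_{k_0+1}|$ and $|\bar q_{k_0+2}|$ between $\tilde v_{k_0+1}$ and $\tilde v_{k_0+2}$ (for $k_0$ odd and large), whose difference is $O(1/k_0^{2})$, so $|\bar q_k|=1-\pi/8k+o(1/k)$ follows with no remainder estimates; the limit $|\bar y_k|\to e^{\pi/2}$ then comes from the location of the double zeros of $\psi_{1,2}$ (governed by $y_k\to-e^{\pi}$ under $x\mapsto -x^2/v$, $-vx^2$) together with Katsnelson's limit in part (4) of Remarks~\ref{remsKa} and Lemma~\ref{absenceofzeros}, not from a stationary-phase count. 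So the missing idea in your proposal is precisely the monotonicity-of-critical-values argument that lets one bypass all fine asymptotics of $\theta$ near $q=\pm1$.
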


\begin{rems}\label{remsKa}
{\rm (1) Theorems~\ref{known} and \ref{new} do not tell whether 
there are values of $q\in (-1,1)$ 
for which $\theta (q,.)$ has a multiple complex conjugate pair of zeros.

(2) It would be interesting to know whether the sequences $\{ y_k\}$ and 
$\{ \bar{y}_{2k-1}\}$ are monotone decreasing and $\{ \bar{y}_{2k}\}$ 
is monotone increasing. This is true for at least the five first terms of each 
sequence.

(3) It would be interesting to know whether there are complex non-real 
values of $q$ of the open unit disk belonging to the spectrum of $\theta$ 
and (as suggested by A.~Sokal) whether $|\tilde{q}_1|$ 
is the smallest of the moduli of the spectral values.

(4) The following statement is formulated and proved in \cite{Ka}: 

{\em The sum of the series 
$\sum _{j=0}^{\infty}q^{j(j+1)/2}x^j$ (considered for $q\in (0,1)$ and 
$x\in \mathbb{C}$) tends to $1/(1-x)$ 
(for $x$ fixed and as $q\rightarrow 1^-$)  
exactly when 
$x$ belongs to the interior of the closed Jordan curve 
$\{ e^{|s|+is}, s\in [-\pi ,\pi]\}$.} 

This statement and equation (\ref{uq}) 
imply that as $q\rightarrow -1^+$, $\theta (q,x)\rightarrow (1-x)/(1+x^2)$ 
for $x\in (-e^{\pi /2},e^{\pi /2})$, $e^{\pi /2}=4.810477381\ldots$. 
Notice that the radius of convergence of 
the Taylor series at $0$ of the function $(1-x)/(1+x^2)$ equals 1.

(5) On Fig.~\ref{parthetanegfirstfour} and \ref{parthetanegsecondfour} we show 
the graphs of $\theta (\bar{q}_k,.)$ for $k=1$, $\ldots$, $8$. The ones for 
$k=1$, $2$, $5$ and $6$ are shown in black, the others are drawn in grey. One 
can notice by looking at Fig.~\ref{parthetanegsecondfour} 
that for $x\in [-2.5,2.5]$ the graphs of $\theta (\bar{q}_k,.)$ for $k\geq 5$ 
are hardly distinguishable from the one of $(1-x)/(1+x^2)$.

(6) The approximative values of $\bar{q}_k$ and $\bar{y}_k$ 
for $k=1$, $2$, $\ldots$ 
$8$ are:}

$$\begin{array}{rcccccccc}k&~~~~~&1&~~&2&~~&3&~~&4\\ 
-\bar{q}_k&~~~~~&0.72713332&~~&0.78374209&~~&
0.84160192&~~&0.86125727\\ 
\bar{y}_k&~~~~~&-2.991&~~&2.907&~~&-3.621&~~&3.523\\ \\ 
k&~~~~~&5&~~&6&~~&7&~~&8\\
-\bar{q}_k&~~~~~&0.88795282&~~&0.89790438&~~&
0.913191~~&~~&0.9192012~\\ 
\bar{y}_k&~~~~~&-3.908&~~&3.823&~~&-4.08~&~~&4.002
\end{array}$$

\end{rems} 

\begin{figure}[htbp]
%\includegraphics[scale=0.5]{parthetanegfirstfour.eps}
%\centerline{\hbox{\includegraphics[scale=0.7]{parthetanegfirstfour.eps}}}
\centerline{\hbox{\includegraphics[scale=0.7]{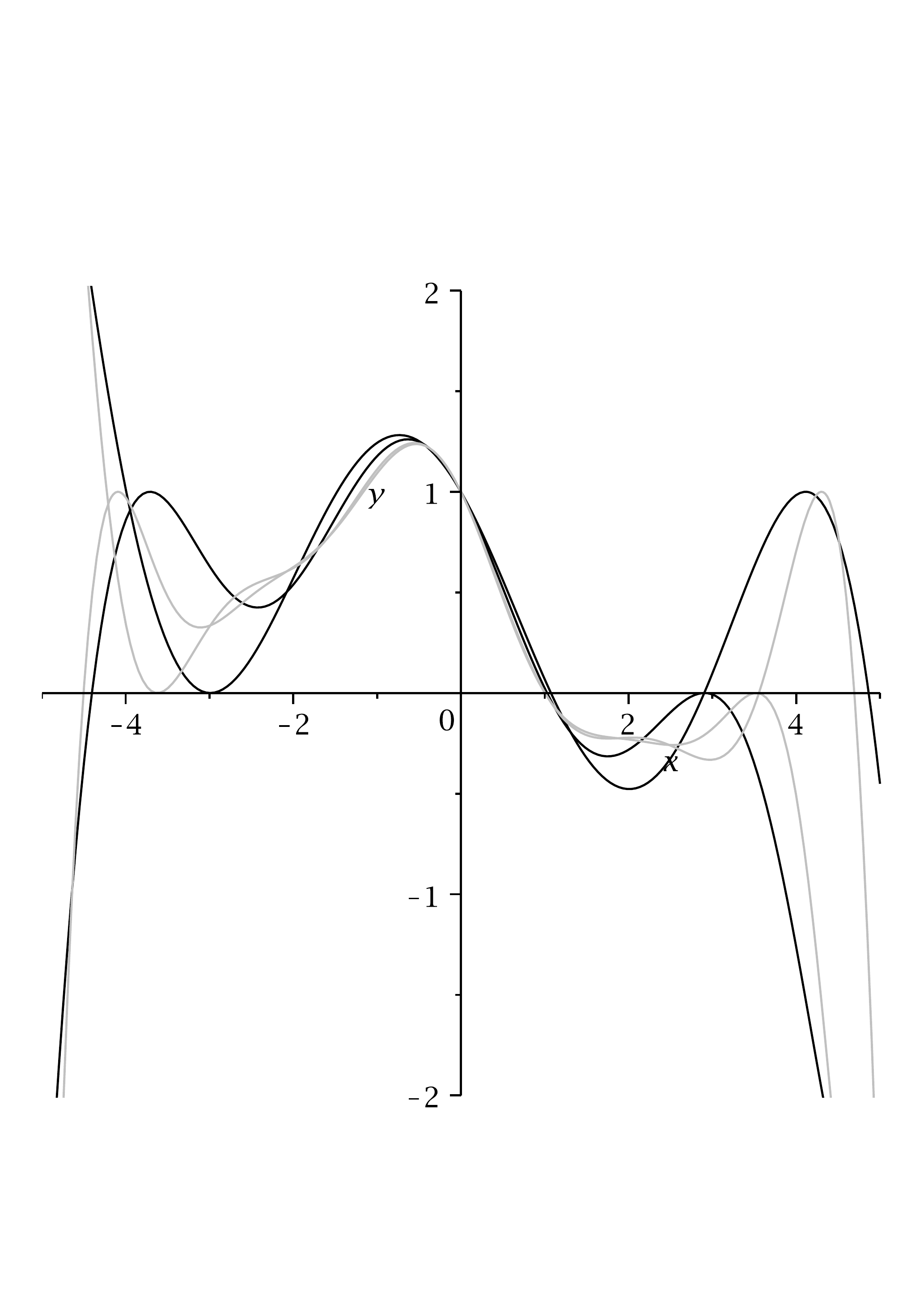}}}
%\centerline{\hbox{\epsfxsize=10cm \epsfbox{k=1234.pdf}}}
    \caption{The graphs of the functions 
$\theta (\bar{q}_k,.)$ for $k=1$, $2$, $3$ and $4$.}
\label{parthetanegfirstfour}
\end{figure}

\begin{figure}[htbp]
%\includegraphics{parthetanegsecondfour.eps}
%\centerline{\hbox{\includegraphics[scale=0.7]{parthetanegsecondfour.eps}}}
\centerline{\hbox{\includegraphics[scale=0.7]{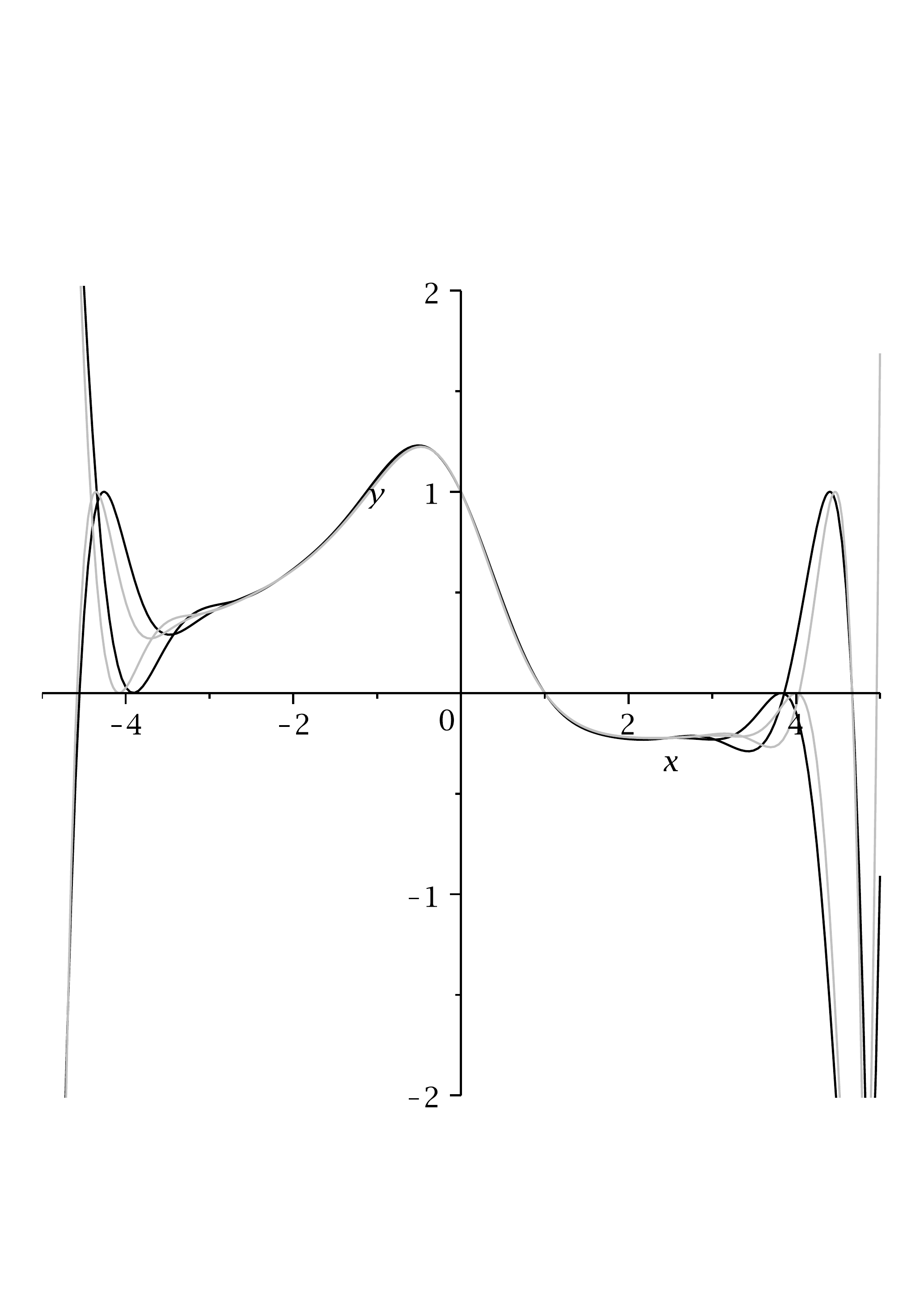}}}
%\centerline{\hbox{\epsfxsize=10cm \epsfbox{k=5678.pdf}}}
%   \centerline{\hbox{\epsfxsize=10cm \epsfbox{parthetanegsecondfour.eps}}}
    \caption{The graphs of the functions 
$\theta (\bar{q}_k,.)$ for $k=5$, $6$, $7$ and $8$.}
\label{parthetanegsecondfour}
\end{figure}

\section{Some facts about $\theta$}

This section contains properties of the function $\theta$, known or 
proved in \cite{Ko2}. When a property is valid for all $q$ from the unit disk 
or for all $q\in (-1,1)$, we write $\theta (q,x)$. When a property holds true 
only for $q\in [0,1)$ or only for $q\in (-1,0]$, we write $\theta (v,x)$ or 
$\theta (-v,x)$, where $v\in [0,1)$. 

\begin{tm}\label{known1}
(1) The function $\theta$ satisfies the following functional equation:

\begin{equation}\label{FE}
\theta (q,x)=1+qx\theta (q,qx)
\end{equation}
and the following differential equation:

\begin{equation}\label{DE}
2q\partial \theta /\partial q=2x\partial \theta /\partial x+
x^2\partial ^2\theta /\partial x^2=x\partial ^2(x\theta )/\partial x^2 
\end{equation}

(2) For $k\in \mathbb{N}$ one has $\theta (v,-v^{-k})\in (0,v^k)$.

(3) In the following two situations the two conditions 
sgn$(\theta (v,-v^{-k-1/2}))=(-1)^k$ 
and $|\theta (v,-v^{-k-1/2})|>1$ hold true:

~~~~~(i) For $k\in \mathbb{N}$ and $v>0$ small enough;

~~~~~(ii) For any $v\in (0,1)$ fixed and for $k\in \mathbb{N}$ large enough.
\end{tm}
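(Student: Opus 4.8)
The plan is to handle the three parts separately: part (1) is a formal power-series identity, while parts (2) and (3) reduce to estimating explicit one-parameter series obtained from the definition $\theta(q,x)=\sum_{j\ge 0}q^{j(j+1)/2}x^j$, which converges locally uniformly for $|q|<1$, so term-by-term differentiation and reindexing are legitimate. For (\ref{FE}) I would replace $x$ by $qx$ and multiply by $qx$; the general term becomes $q^{(j+1)(j+2)/2}x^{j+1}$ since $1+j+j(j+1)/2=(j+1)(j+2)/2$, and shifting the index by one recovers $\theta(q,x)-1$. For (\ref{DE}) I would compute the three expressions termwise: $2q\,\partial\theta/\partial q$ has general coefficient $j(j+1)$, while $2x\,\partial\theta/\partial x+x^2\partial^2\theta/\partial x^2$ has coefficient $2j+j(j-1)=j(j+1)$, and $x\,\partial^2(x\theta)/\partial x^2$ again has coefficient $j(j+1)$; so all three coincide. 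This part is routine bookkeeping.

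For part (2) the decisive point is a pairwise cancellation. Writing $\theta(v,-v^{-k})=\sum_{j\ge 0}(-1)^j v^{E(j)}$ with $E(j)=j(j+1-2k)/2$, I would observe that the involution $j\mapsto 2k-1-j$ permutes $\{0,1,\dots,2k-1\}$ without fixed points and satisfies $E(2k-1-j)=E(j)$ while flipping the sign, since $(-1)^{2k-1-j}=-(-1)^j$. Hence the first $2k$ summands cancel in pairs and $\theta(v,-v^{-k})=\sum_{j\ge 2k}(-1)^j v^{E(j)}$. Because $E$ attains its real minimum at $j=k-1/2$, it is strictly increasing for $j\ge 2k$, so for $v\in(0,1)$ this tail is an alternating series with strictly decreasing positive magnitudes whose leading (positive) term is $v^{E(2k)}=v^{k}$. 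The usual alternating-series bracketing then gives $0<v^{k}-v^{2k+1}<\theta(v,-v^{-k})<v^{k}$, proving the inclusion in $(0,v^k)$.

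For part (3) I would first symmetrize the series by completing the square in the exponent. With $F(j)=j(j-2k)/2=(j-k)^2/2-k^2/2$, the substitution $l=j-k$ yields the identity $\theta(v,-v^{-k-1/2})=(-1)^k v^{-k^2/2}S_k$, where $S_k:=\sum_{l=-k}^{\infty}(-1)^l v^{l^2/2}$. Since $v^{-k^2/2}>0$, the sign assertion is equivalent to $S_k>0$ and the bound $|\theta(v,-v^{-k-1/2})|>1$ is equivalent to $S_k>v^{k^2/2}$. In situation (i), for fixed $k$ and $v\to 0^+$ the term $l=0$ dominates, $S_k\to 1$, and $v^{k^2/2}\to 0$, so both inequalities hold for $v$ small. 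In situation (ii), for fixed $v$ the tail $\sum_{l>k}(-1)^l v^{l^2/2}$ tends to $0$, so $S_k\to S_\infty:=\sum_{l\in\mathbb{Z}}(-1)^l v^{l^2/2}$; as $S_\infty$ is a fixed constant and $v^{k^2/2}\to 0$, both inequalities will follow for $k$ large provided $S_\infty>0$.

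The main obstacle is precisely this positivity of $S_\infty$ in situation (ii). Writing $w=v^{1/2}\in(0,1)$, one has $S_\infty=\sum_{l\in\mathbb{Z}}(-1)^l w^{l^2}$, a Jacobi theta constant, and I would deduce $S_\infty>0$ from the Jacobi triple product $\sum_{l\in\mathbb{Z}}(-1)^l w^{l^2}=\prod_{m\ge 1}(1-w^{2m})(1-w^{2m-1})^2$, each factor of which is positive for $w\in(0,1)$. All remaining steps in parts (2) and (3) are elementary estimates on convergent series.
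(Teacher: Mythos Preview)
The paper does not supply its own proof of this theorem; it merely records these facts as ``known or proved in \cite{Ko2}''. Your argument therefore cannot be compared against anything here, but it is correct in all three parts.

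Part (1) is routine, as you say. In part (2) the pairing $j\leftrightarrow 2k-1-j$ is the natural device: one checks $E(2k-1-j)=E(j)$ directly, and the parity flip kills the first $2k$ terms, leaving an alternating tail with strictly increasing exponents whose leading term is $+v^{k}$; the standard alternating-series bracket then gives $0<\theta(v,-v^{-k})<v^{k}$. In part (3) completing the square to reach $\theta(v,-v^{-k-1/2})=(-1)^{k}v^{-k^{2}/2}S_{k}$ with $S_{k}=\sum_{l\ge -k}(-1)^{l}v^{l^{2}/2}$ is the efficient move, and situations (i) and (ii) follow immediately from $S_{k}\to 1$ as $v\to 0^{+}$ and $S_{k}\to S_{\infty}$ as $k\to\infty$, together with $v^{k^{2}/2}\to 0$ in each limit.

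The only external input you use is the positivity of $S_{\infty}=\sum_{l\in\mathbb{Z}}(-1)^{l}w^{l^{2}}$ for $w=v^{1/2}\in(0,1)$, which you obtain from the Jacobi triple product $\sum_{l\in\mathbb{Z}}(-1)^{l}w^{l^{2}}=\prod_{m\ge 1}(1-w^{2m})(1-w^{2m-1})^{2}>0$. This is the clean, self-contained route. It is worth noting that the present paper uses exactly this positivity later, in the proof of part (4) of Proposition~\ref{phixi}, where the same quantity appears as $\psi(\tau)=1+2\sum_{j\ge 1}(-1)^{j}\tau^{j^{2}}$ and its positivity is quoted from \cite{Ko1}; your triple-product argument makes that step independent of the reference.
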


The real entire
function $\psi(z)$ is said to belong to the Laguerre-P\'olya class
${\cal L-P}$ if it can be represented as
$$
\psi(x) = c x^{m} e^{-\alpha x^{2} + \beta x} \prod_{k=1}^{\omega}
(1+x/x_{k}) e^{-x/x_{k}},
$$
where $\omega$ is a natural number or infinity, $c$, $\beta$ and 
$x_{k}$ are real,
$\alpha \geq 0$, $m$ is a nonnegative integer and $\sum x_{k}^{-2} < \infty$.
Similarly, the
real entire function $\psi_*(x)$ is a function of type I in the
Laguerre-P\'{o}lya class, written $\psi_* \in {\cal L-PI}$, if $\psi_*(x)$
or $\psi_*(-x)$ can be represented in the form
\begin{equation}
\psi_*(x) = c x^{m} e^{\sigma x} \prod_{k=1}^{\omega} (1+x/x_{k}),
\label{1.2}
\end{equation}
where $c$ and $\sigma$ are real, $\sigma \geq 0$, $m$ is a nonnegative
integer, $x_{k}>0$, and $\sum 1/x_{k} < \infty$. It is clear that
${\cal L-PI}\subset{\cal L-P}$.  The functions in ${\cal L-P}$, 
and only these, are
uniform limits, on compact subsets of $\mathbb{C}$, of hyperbolic polynomials
(see, for example, Levin \cite[Chapter 8]{Le}). Similarly, 
$\psi \in {\cal L-PI}$
if and only if $\psi$ is a uniform limit on the compact sets 
of the complex plane of
polynomials whose zeros are real and are either all positive, or all
negative. Thus, the classes ${\cal L-P}$ and ${\cal L-PI}$ are closed under
differentiation; that is, if $\psi \in {\cal L-P}$, then $\psi^{(\nu)} \in
{\cal L-P}$ for every $\nu \in \mathbb{N}$ and similarly, if $\psi \in
{\cal L-PI}$, then $\psi^{(\nu)} \in {\cal L-PI}$ . P\'{o}lya and Schur
\cite{PolSch14} proved that if
\begin{equation}
\psi(x) = \sum_{k=0}^{\infty} \gamma_{k} \frac{x^{k}}{k!}
\label{Maclaurin}
\end{equation}
belongs to ${\cal L-P}$ and its Maclaurin coefficients $\gamma_k=\psi^{(k)}(0)$
are all nonnegative, then $\psi \in {\cal L-PI}$.

The following theorem is the basic result contained in~\cite{Ko5}:

\begin{tm}\label{fundtm}
(1) For any fixed $q\in \mathbb{C}^*$, $|q|<1$, and for $k$ sufficiently 
large, the function $\theta (q,.)$ has a zero $\zeta _k$ close to $-q^{-k}$ 
(in the sense that $|\zeta _k+q^{-k}|\rightarrow 0$ as $k\rightarrow \infty$). 
These are all but finitely-many of the zeros of $\theta$.

(2) For any $q\in \mathbb{C}^*$, $|q|<1$, one has 
$\theta (q,x)=\prod _k(1+x/x_k)$, where $-x_k$ are the zeros of $\theta$ 
counted with multiplicity.

(3) For $q\in (\tilde{q}_{j},\tilde{q}_{j+1}]$ the function 
$\theta (q,.)$ is a product of a degree $2j$ real polynomial without real roots 
and a function of the Laguerre-P\'olya class $\cal{L-PI}$. 
Their respective forms are $\prod _{k=1}^{2j}(1+x/\eta _k)$ and 
$\prod _k(1+x/\xi _k)$, where $-\eta _k$ and $-\xi _k$ are  
the complex and the real zeros of $\theta$ counted with multiplicity.

(4) For any fixed $q\in \mathbb{C}^*$, $|q|<1$, the function $\theta (q,.)$ has 
at most finitely-many multiple zeros.

(5) For any $q\in (-1,0)$ the function $\theta (q,.)$ is a product of the form 
$R(q,.)\Lambda (q,.)$, where $R=\prod _{k=1}^{2j}(1+x/\tilde{\eta}_k)$ 
is a real polynomial with constant term $1$ and without real zeros and 
$\Lambda =\prod _k(1+x/\tilde{\xi}_k)$, $\tilde{\xi}_k\in \mathbb{R}^*$, is a 
function of the Laguerre-P\'olya class $\cal{L-P}$. One has 
$\tilde{\xi}_k\tilde{\xi}_{k+1}<0$. The sequence $\{ |\tilde{\xi}_k|\}$ is 
monotone increasing for $k$ large enough.
\end{tm}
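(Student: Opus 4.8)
The decisive part is (1): once it is known, (2)–(5) follow quickly, using in addition the zero‑counts already available from Theorems~\ref{known} and~\ref{new}. For (1) the plan is to substitute $x=-q^{-k}y$ and analyse
$$F_k(y):=\theta(q,-q^{-k}y)=\sum_{j\ge0}(-1)^jq^{\,j(j+1)/2-kj}\,y^{\,j}$$
as $k\to\infty$ with $y$ near $1$. The exponent $j(j+1)/2-kj$ is a quadratic in $j$ whose integer minimisers are $j=k-1$ and $j=k$, both giving the value $-k(k-1)/2$, and which exceeds that value by $\tfrac12(j-k)(j-k+1)\ge1$ for every other $j$; the two dominant terms therefore produce the factor $y^{k-1}(1-y)$, and a crude reading of this gives a zero of $F_k$ within $O(q)$ of $y=1$ only. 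The improvement to a zero that is super‑exponentially close to $1$ comes from the reflection $j\mapsto 2k-1-j$, under which the exponent is invariant while $(-1)^j$ changes sign. Pairing $j\in\{0,\dots,2k-1\}$ accordingly yields
$$F_k(y)=(1-y)P_k(y,q)+T_k(y),\qquad
P_k(y,q)=\sum_{j=0}^{k-1}(-1)^jq^{\,j(j+1)/2-kj}\,y^{\,j}\,\frac{1-y^{\,2k-1-2j}}{1-y},\qquad
T_k(y)=\sum_{j\ge2k}(-1)^jq^{\,j(j+1)/2-kj}y^{\,j},$$
each difference $y^{j}-y^{2k-1-j}$ being divisible by $1-y$.

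Here $P_k(y,q)=(-1)^{k-1}q^{-k(k-1)/2}y^{k-1}\bigl(1+O(q)\bigr)$ uniformly for $y$ in a fixed disc about $1$, and in particular $P_k(1,q)=(-1)^{k-1}q^{-k(k-1)/2}\sum_{m=0}^{k-1}(-1)^m(2m+1)q^{\,m(m+1)/2}$, which by Jacobi's identity $\sum_{m\ge0}(-1)^m(2m+1)q^{\,m(m+1)/2}=\prod_{n\ge1}(1-q^n)^3$ tends to $(-1)^{k-1}q^{-k(k-1)/2}\prod_{n\ge1}(1-q^n)^3\ne0$; meanwhile $|T_k(y)|=O(|q|^k)$ on that disc, so $|T_k|<|(1-y)P_k|$ on a small fixed circle $|y-1|=\rho$ once $k$ is large. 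By Rouch\'e, $F_k$ has exactly one zero $y_k$ in $|y-1|<\rho$, and quantitatively $y_k=1+O(|q|^{k(k+1)/2})$; hence $\zeta_k:=-q^{-k}y_k$ is a zero of $\theta$ with $|\zeta_k+q^{-k}|=O(|q|^{k(k-1)/2})\to0$. That these are all but finitely many of the zeros follows from a companion Rouch\'e estimate on the circles $|x|=|q|^{-k-1/2}$, where the index‑$k$ term of the Taylor series controls the rest: $\theta(q,\cdot)$ then has exactly one zero in each annulus $|q|^{-k+1/2}<|x|<|q|^{-k-1/2}$ for $k$ large, namely $\zeta_k$.

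For (2), since $|a_j|=|q|^{\,j(j+1)/2}$ the function $\theta(q,\cdot)$ is entire of order $0$ with $\theta(q,0)=1$, so its Hadamard product has genus $0$ and no exponential or polynomial factor: $\theta(q,x)=\prod_k(1-x/\zeta_k)$ (convergent because $\sum1/|\zeta_k|<\infty$ by (1)); put $\zeta_k=-x_k$. For (4), $F_k'(y_k)\approx-P_k(1,q)\ne0$, so the $\zeta_k$ with $k$ large are simple, whence all multiple zeros lie in a fixed bounded disc and are finite in number. For (3) and (5) one splits the product of (2) as $\theta=R\cdot S$ with $R=\prod_{-\eta_i\text{ non-real}}(1+x/\eta_i)$ and $S=\prod_{-\xi_i\text{ real}}(1+x/\xi_i)$; by (1) only finitely many zeros are non‑real (for $q$ real the near‑$1$ zero $y_k$ of $F_k$ is real, so $\zeta_k\in\mathbb R$) and they come in conjugate pairs, so $R$ is a real polynomial of even degree $2j$ without real roots, with $j$ the number of complex conjugate pairs given by Theorem~\ref{known}(4) (resp. Theorem~\ref{new}(4)). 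If $q\in(\tilde q_j,\tilde q_{j+1}]\subset[0,1)$, all Taylor coefficients of $\theta$ are positive, so Descartes forbids positive real zeros, every $\xi_i>0$, and $S=\prod(1+x/\xi_i)$ with $\sum1/\xi_i<\infty$ is of type~I in the Laguerre–P\'olya class. If $q\in(-1,0)$, the real zeros ordered by increasing modulus agree for large index with the points $-q^{-k}=(-1)^{k+1}(-q)^{-k}$, whose moduli increase strictly and whose signs alternate in $k$; hence $\xi_i\xi_{i+1}<0$ and $|\xi_i|$ is eventually monotone increasing, while $S=\prod(1+x/\xi_i)$ with $\xi_i\in\mathbb R^*$, $\sum1/|\xi_i|<\infty$, lies in the Laguerre–P\'olya class (the finitely many remaining small real zeros being checked directly).

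The main obstacle is the uniformity of the argument for (1) over all $q$ with $|q|<1$, and especially for $|q|$ close to $1$. There, near $|x|=|q|^{-k-1/2}$ the series has a whole "Gaussian block" of comparably sized terms rather than a single dominant one, so the clean "one zero per annulus" Rouch\'e count (and, to a lesser extent, the non‑vanishing of $P_k(\cdot,q)$ on the circle $|y-1|=\rho$) must be replaced by a more delicate estimate — grouping the series into blocks, or iterating the functional equation $\theta(q,x)=1+qx\theta(q,qx)$ to relate the zeros at successive scales. Once this counting is secured, the remaining steps above are routine, so I would concentrate the effort there.
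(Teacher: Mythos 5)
First, a point of reference: the paper does not prove Theorem~\ref{fundtm} at all --- it is quoted verbatim as ``the basic result contained in \cite{Ko5}''. So there is no in-paper proof to compare against, and your proposal has to stand on its own.

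The local part of your argument for (1) is sound and rather elegant: the substitution $x=-q^{-k}y$, the observation that the exponent $j(j+1)/2-kj$ is symmetric under $j\mapsto 2k-1-j$ while the sign alternation is antisymmetric, the resulting factorization $F_k=(1-y)P_k+T_k$, and the identification of $\lim_k P_k(1,q)$ with $(-1)^{k-1}q^{-k(k-1)/2}\prod_{n\ge1}(1-q^n)^3\ne 0$ via Jacobi's identity do yield, for each fixed $q$ with $|q|<1$ and $\rho=\rho(q)$ small, exactly one zero of $\theta(q,\cdot)$ in $|x+q^{-k}|<\rho|q|^{-k}$ for $k$ large (continuity of the limit function near $y=1$ gives the needed lower bound for $|P_k|$ on the circle, and your estimate of $T_k$ goes through for fixed $q$). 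Parts (2) and (4), and the ``conjugate pairs'' structure in (3) and (5), would indeed follow routinely from this plus Hadamard factorization at order zero.

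The genuine gap is exactly the one you flag and then set aside: the claim that the $\zeta_k$ are \emph{all but finitely many} of the zeros. Your proposed mechanism --- Rouch\'e on the circles $|x|=|q|^{-k-1/2}$ with the single index-$k$ term dominating --- requires $\sum_{i\ge1}|q|^{i^2/2}<1/2$, which holds only for $|q|$ below roughly $0.3$; for $|q|$ near $1$ an entire Gaussian block of $\sim(\log(1/|q|))^{-1/2}$ terms has comparable modulus and no such domination is available, even for a single fixed $q$. This is not a technical refinement one can wave at: without the exact count of one zero per annulus (or an equivalent minimum-modulus/Jensen-type argument sharp to within $o(1)$ zeros, which the crude bound $n(r)\le\log M(2r)/\log 2$ does not give), part (1) is unproved for $|q|>0.3$, and with it parts (3) and (5) collapse, since the assertion that the non-real zeros form a polynomial factor of finite (indeed specified) degree $2j$ and that \emph{everything else} is a real zero in the prescribed alternating arrangement is precisely the content of that count. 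The sign-alternation $\tilde\xi_k\tilde\xi_{k+1}<0$ for \emph{all} $k$ in (5) is likewise only argued for large $k$; ``checked directly'' is not available for a general $q\in(-1,0)$. In short: you have correctly isolated where the difficulty lives, but the difficulty is the theorem, and the proposal does not resolve it.
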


\section{Proof of Theorem~\protect\ref{newfirst}
\protect\label{secnewfirst}}

One can use equation (\ref{uq}). By part (3) of 
Theorem~\ref{known1} with $v^4$ for $v$, 
for each $v\in (0,1)$ fixed and for $k$ large enough, if 
$-x^2/v=-(v^4)^{-k-1/2}$ (i.e. if $|x|=v^{-2k-1/2}$), then  
$|\theta (v^4,-x^2/v)|>1$ and sgn$(\theta (v^4,-x^2/v))=(-1)^k$. 
At the same time part (2) of Theorem~\ref{known1} implies that for 
$-vx^2=-(v^4)^{-k}$ (i.e. again for $|x|=v^{-2k-1/2}$) one has 
$\theta (v^4,-vx^2)\in (0,v^{4k})$ hence 
$|vx\theta (v^4,-vx^2)|<v^{2k+1/2}<1$. This means that for 
$v\in (0,1)$ fixed and for 
$k$ large enough the equality sgn$(\theta (v^4,-x^2/v))=(-1)^k$ holds, i.e. 
there is a zero of $\theta$ on each interval of the form 
$(-v^{-2k-1/2},-v^{-2k+1/2})$ and $(v^{-2k+1/2},v^{-2k-1/2})$.

\section{Proof of Theorem~\protect\ref{new}
\protect\label{secnew}}

\subsection{Properties of the zeros of $\theta$}

The present subsection contains some preliminary information about the zeros 
of $\theta$. 
%The following lemma is a corollary of the result of 
%V.~Katsnelson cited in part (4) of Remarks~\ref{remsKa}:

\begin{lm}\label{lm108}
For $q\in [-0.108, 0)$ all zeros of $\theta (q,.)$ are real and distinct.
\end{lm}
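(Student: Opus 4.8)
The plan is to combine the known bound from \cite{Ko4} --- that for $|q|\leq 0.108$ the function $\theta(q,\cdot)$ has no multiple zeros --- with a count of the real zeros that is forced to be "full", so that no complex zeros remain. First I would invoke Theorem~\ref{newfirst}: for every $q\in(-1,0)$ the function $\theta(q,\cdot)$ already has infinitely many negative and infinitely many positive real zeros. Next I would use part~(1) of Theorem~\ref{fundtm}: for $q$ fixed, all but finitely many zeros of $\theta(q,\cdot)$ lie near $-q^{-k}$, $k\to\infty$, hence are real (they alternate in sign for $q<0$), and part~(2) gives the Hadamard-type factorization $\theta(q,x)=\prod_k(1+x/x_k)$ with the $-x_k$ the zeros counted with multiplicity. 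So the only possible non-real zeros form a finite set, and by \cite{Ko4} (quoted in the excerpt) for $|q|\leq 0.108$ there are no multiple zeros at all; thus it remains to exclude a finite number of simple complex-conjugate pairs.

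To rule those out I would argue that for $|q|$ small the "small" zeros of $\theta(q,\cdot)$ are close to the zeros of its truncations, which are real. Concretely, write $\theta(q,x)=1+qx+q^3x^2+q^6x^3+\cdots$; for $q$ near $0$ the map $x\mapsto x$ is heavily dominated by the first two terms on any fixed-radius disk, and more precisely on the circle $|x|=r$ with $r$ chosen in a suitable range one has $|1+qx|>\sum_{j\geq2}|q|^{j(j+1)/2}r^j$ when $|q|\leq 0.108$; a Rouch\'e argument then pins exactly one zero near $-1/q$ and, rescaling $x=y/q^{k}$ and using the functional equation \eqref{FE} $\theta(q,x)=1+qx\,\theta(q,qx)$, shows inductively that the zero $\zeta_k$ near $-q^{-k}$ is the only zero in an annulus separating consecutive scales. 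Since $-q^{-k}$ is real and these annuli exhaust $\mathbb{C}^*$, every zero is real; combined with \cite{Ko4} every zero is also simple.

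An alternative, cleaner route avoids Rouch\'e: one can quote part~(5) of Theorem~\ref{fundtm}, which for $q\in(-1,0)$ factors $\theta(q,\cdot)=R(q,\cdot)\Lambda(q,\cdot)$ with $R=\prod_{k=1}^{2j}(1+x/\tilde\eta_k)$ a real polynomial of \emph{even} degree $2j$ with no real zeros and $\Lambda\in\mathcal{L\!-\!P}$ carrying all the real zeros. The content of the lemma is then exactly that $j=0$ for $q\in[-0.108,0)$. Because $j$ is the number of complex-conjugate pairs and, by part~(4) of Theorem~\ref{fundtm} together with Theorem~\ref{new}(3), $j$ jumps from $0$ upward precisely as $q$ crosses the spectral values $\bar q_k$, it suffices to show that no $\bar q_k$ lies in $[-0.108,0)$, i.e. $\bar q_k<-0.108$ for all $k$ (equivalently $-\bar q_k>0.108$, consistent with the tabulated $-\bar q_1=0.727\ldots$). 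Since at $q$ with $|q|\leq0.108$ there are no multiple zeros at all by \cite{Ko4}, no spectral value can lie there, so indeed $j$ has not yet left $0$, and all zeros are real and distinct.

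The main obstacle is the last implication: one must be sure that on the segment $q\in[-0.108,0)$ the number $j$ of complex pairs is genuinely $0$ and not, say, already positive with the complex pair born at some $q$ with $|q|>0.108$ but then... no --- the point is rather that $j(q)$ can only change when $\theta(q,\cdot)$ acquires a multiple zero (a real zero splitting into a complex pair or vice versa), which by \cite{Ko4} cannot happen for $|q|\leq 0.108$; and at $q=0$ (or $q\to0^-$) one checks directly that $j=0$ since $\theta(0,x)=1$ degenerates and for $q$ slightly negative all zeros are the simple real ones near $-q^{-k}$. Making the "$q\to0^-$ anchoring" rigorous --- that for $|q|$ below some explicit threshold \emph{all} zeros (not just all but finitely many) are real --- is the one genuinely quantitative step, and it is exactly what the Rouch\'e estimate in the first route supplies; I would present that estimate as the core of the proof and cite \cite{Ko4} for simplicity of the zeros.
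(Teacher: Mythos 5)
Your proposal has genuine gaps, and both of your routes miss the fact that makes the lemma a two-line argument. The paper's proof does not reason about counts of complex pairs at all: it quotes from \cite{Ko4} the \emph{stronger} localization that for $|q|\le 0.108$ every zero of $\theta(q,\cdot)$ has the form $-q^{-j}\Delta_j$ with $\Delta_j\in[0.2118,1.7882]$. Since $|q|^{-1}\ge 1/0.108>1.7882/0.2118$, the moduli of the zeros are pairwise distinct; as $q$ is real, any non-real zero would come with its conjugate of equal modulus, which is impossible, so all zeros are real, and distinct moduli also force them to be simple. You only imported the weaker corollary ``no multiple zeros for $|q|\le0.108$'' and then tried to rebuild the realness statement by a deformation argument, which is where the problems arise.

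Concretely: your ``cleaner'' second route is circular, because it invokes Theorem~\ref{new}(3) (and the identification of the jumps of $j$ with the spectral values $\bar q_k$), but Theorem~\ref{new} is proved later in the paper using Lemma~\ref{lm108} and its consequences (the arrangement of the zeros, Propositions~\ref{coalesceeven} and~\ref{finmany}); also Theorem~\ref{fundtm}(3) concerns $q\in(0,1)$ and part (5) gives the factorization $R\Lambda$ for $q\in(-1,0)$ without any information on the degree $2j$ or on how $j$ varies with $q$. In the first route, the two essential steps are only asserted: (i) the Rouch\'e estimate that pins one zero in each annulus and works all the way up to $|q|=0.108$ is exactly the quantitative content of the cited result of \cite{Ko4} and is not carried out (proving it would amount to reproving that paper's localization, at which point the distinct-moduli argument above finishes immediately); and (ii) the claim that the number of complex-conjugate pairs cannot change on $[-0.108,0)$ without a real multiple zero needs, in addition to Hurwitz continuity on compact sets, a uniform control preventing zeros from entering or leaving through infinity as $q$ varies (the zeros do run off to infinity as $q\to0^-$), a point the paper itself has to handle with a separate argument in Proposition~\ref{finmany}. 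As written, neither route constitutes a proof; the fix is simply to use the full localization $-q^{-j}\Delta_j$, $\Delta_j\in[0.2118,1.7882]$, from \cite{Ko4}.
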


\begin{proof}
Indeed, it is shown in \cite{Ko4} that for $q\in \mathbb{C}$, 
$|q|\leq 0.108$, the zeros of $\theta$ are of the form $-q^{-j}\Delta _j$, 
$\Delta _j\in [0.2118, 1.7882]$. This implies (see \cite{Ko4}) 
that the moduli of all zeros are distinct for $|q|\leq 0.108$. When $q$ 
is real, then as all coefficients of $\theta$ are real, each of its 
zeros is either real or belongs to a complex conjugate pair. The moduli of 
the zeros being distinct the zeros are all real and distinct. 
\end{proof}

\begin{nota}
{\rm We denote by $0<x_1<x_3<\cdots$ the positive and by $\cdots <x_4<x_2<0$ 
the negative zeros of $\theta$. For $q\in [-0.108, 0)$ this notation 
is in line with the fact that $x_j$ is close to $-q^{-j}$.}
\end{nota}

\begin{rems}\label{zerosoftheta}
{\rm (1) The quantities $\Delta _j$ are constructed in \cite{Ko4} as 
convergent Taylor series in $q$ of the form $1+O(q)$.

(2) For $q\in (-1,0)$ the function $\theta (q,.)$ has no zeros in $[-1,0)$. 
Indeed, this follows from} 

$$\theta =1+q^3x^2+q^{10}x^4+\cdots +qx(1+q^5x^2+q^{14}x^4+\cdots )~.$$
{\rm Each of the two series is sign-alternating, 
with positive first term and 
with decreasing moduli of its terms for $q\in (-1,0)$, $x\in [-1,0)$. 
Hence their sums are positive; as $qx>0$, one has $\theta >0$.}
\end{rems}

%\begin{defi}
%{\rm We say that the phenomenon A happens before the phenomenon 
%B if A takes place for $q=q_1$, B takes place for $q=q_2$ and 
%$-1<q_2<q_1<0$. By phenomena we mean that certain zeros of $\theta$ or 
%another function coalesce.}
%\end{defi} 

\begin{lm}
For $q\in [-0.108, 0)$ the real zeros of $\theta$ and their products 
with $q$ are arranged on the real 
line as shown on Fig.~\ref{thetaqneg}.
\end{lm}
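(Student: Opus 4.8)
The plan is to establish the claimed arrangement of the real zeros $x_j$ and their products $qx_j$ on the real line by combining the near-root description from \cite{Ko4} with the functional equation (\ref{FE}). Since $q\in[-0.108,0)$, each zero is of the form $x_j=-q^{-j}\Delta_j$ with $\Delta_j\in[0.2118,1.7882]$, which I would use to obtain two-sided bounds $|q|^{-j}\cdot 0.2118\le |x_j|\le |q|^{-j}\cdot 1.7882$. First I would note the sign pattern: $x_j$ has the sign of $-q^{-j}$, so for $q<0$ the zeros $x_1,x_3,x_5,\dots$ are positive and $x_2,x_4,x_6,\dots$ are negative, consistent with the adopted notation. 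The product $qx_j=-q^{-j+1}\Delta_j$ then lies strictly between $x_{j-1}$ and the origin in a controlled way; more precisely, $qx_j$ has the same sign as $x_{j-1}$ (both sign of $-q^{-(j-1)}$), and $|qx_j|=|q|^{-(j-1)}\Delta_j$ while $|x_{j-1}|=|q|^{-(j-1)}\Delta_{j-1}$.

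The heart of the argument is the comparison of $\Delta_j$ with $\Delta_{j-1}$ and with the constant-order bounds, together with the observation that since $|q|\le 0.108$ we have $|q|^{-1}\ge 1/0.108>9.25$, a ratio much larger than $1.7882/0.2118\approx 8.44$. This gap is what forces the interleaving. Concretely, for consecutive same-sign zeros (indices differing by $2$), $|x_{j+2}|/|x_j|=|q|^{-2}\Delta_{j+2}/\Delta_j\ge |q|^{-2}\cdot(0.2118/1.7882)>85\cdot 0.118>1$, so the moduli are strictly increasing along each of the two sign-classes — reproving in this range the monotonicity that Lemma~\ref{lm108} used. For the position of $qx_j$ relative to $x_{j-1}$ I would compare $|qx_j|=|q|^{-(j-1)}\Delta_j$ with $|x_{j-1}|=|q|^{-(j-1)}\Delta_{j-1}$: since both $\Delta$'s lie in $[0.2118,1.7882]$ their ratio is within a factor $8.44$ of $1$, whereas the neighbouring genuine zero on that side, $x_{j-3}$ (same sign, one step closer to $0$), has modulus smaller by the factor $|q|^{2}>85^{-1}$; hence $qx_j$ sits strictly between $x_{j-1}$ and $x_{j-3}$, i.e. the products $qx_j$ are interlaced with the zeros exactly as the figure shows. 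Similarly $qx_1$ and $qx_2$ are the two points of smallest modulus, lying on opposite sides of $0$, and by Remark~\ref{zerosoftheta}(2) the whole picture avoids the interval $[-1,0)$ (and by symmetry of the sign bookkeeping, a neighbourhood of $0$), matching the figure.

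The main obstacle I anticipate is purely bookkeeping: one must verify that the \emph{finitely many} small-index comparisons (the relative order of $x_1$, $qx_1$, $x_2$, $qx_2$, $x_3$, $qx_3$, and perhaps $x_4$) hold with the \emph{explicit} numerical values of $\Delta_j$, since for the smallest indices the asymptotic slack $|q|^{-1}$ versus $1.7882/0.2118$ must be checked not to be eaten up by an unfavourable choice of the $\Delta$'s. For this I would simply invoke the Taylor expansions $\Delta_j=1+O(q)$ from Remark~\ref{zerosoftheta}(1), so that for $q\in[-0.108,0)$ each $\Delta_j$ is close to $1$ (not merely in $[0.2118,1.7882]$), which tightens every ratio above to within a factor $\approx(1+O(0.108))$ of $|q|$-powers and makes all the orderings in the figure immediate. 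The remaining, genuinely infinite, part of the claim — the periodic interleaving pattern for large $j$ — then follows uniformly from the same modulus estimates together with the alternation of signs, with no further case analysis needed.
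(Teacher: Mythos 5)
There is a genuine gap, and in fact the conclusion you draw is partly wrong. The whole content of the figure is the position of $qx_j$ relative to the zero $x_{j-1}$ of the \emph{same} order of magnitude: one has $|x_{j-1}|=|q|^{-(j-1)}\Delta_{j-1}$ and $|qx_j|=|q|^{-(j-1)}\Delta_j$, so deciding on which side of $x_{j-1}$ the point $qx_j$ falls amounts to comparing $\Delta_{j-1}$ with $\Delta_j$. Neither the crude bounds $\Delta_j\in[0.2118,1.7882]$ nor the refinement $\Delta_j=1+O(q)$ can settle this: the latter only shows that $qx_j$ and $x_{j-1}$ are close to each other, not which one is larger. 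Your modulus estimates correctly place $qx_j$ strictly between $x_{j-3}$ and $x_{j+1}$, but the claim that it always lies between $x_{j-3}$ and $x_{j-1}$ is false for half of the indices: the arrangement actually needed later (in the proof of Lemma~\ref{prec}) includes $x_{4k+1}<qx_{4k+2}$ and $x_{4k+5}<qx_{4k+6}$, i.e.\ for $j\equiv 1,2 \pmod 4$ the product $qx_j$ lies \emph{beyond} $x_{j-1}$, away from the origin, while for $j\equiv 0,3\pmod 4$ it lies on the origin side (e.g.\ $qx_{4k+4}<x_{4k+3}$, $x_{4k+2}<qx_{4k+3}$). No purely metric argument of the kind you propose can produce this alternation.

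The missing idea is the sign information coming from the functional equation (\ref{FE}): at a zero one has $0=1+qx_j\,\theta(q,qx_j)$, hence $\theta(q,qx_j)=-1/(qx_j)$, which is negative when $qx_j>0$ (i.e.\ $j$ even) and positive when $qx_j<0$ (i.e.\ $j$ odd). Since for small $|q|$ the point $qx_j$ is close to $x_{j-1}$ and $\theta$ changes sign at each of its simple real zeros, this sign determines exactly which of the two intervals adjacent to $x_{j-1}$ contains $qx_j$; continuity in $q$ then propagates the resulting inequalities throughout the range where the relevant zeros stay real and distinct. That is the paper's argument, and some such sign input has to be added to your proposal — the localization by moduli alone, however tight, cannot yield the figure.
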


\begin{proof}
The lemma follows from equation (\ref{FE}). Indeed, 

$$0=\theta (q,x_{4k+2})=1+qx_{4k+2}\theta (q,qx_{4k+2})~~,~~
x_{4k+2}<0~~{\rm and}~~q<0~~{\rm hence}~~
\theta (q,qx_{4k+2})<0~.$$
For small values of $q$ the quantity $qx_{4k+2}$ is close to $-q^{-4k-1}$ (see 
Lemma~\ref{lm108} and part (1) of Remarks~\ref{zerosoftheta}), i.e. 
close to $x_{4k+1}$ and as $\theta (q,qx_{4k+2})<0$, one must have 
$x_{4k+1}<qx_{4k+2}<x_{4k+3}$. By continuity these inequalities hold for all $q<0$ 
for which the zeros $x_{4k+1}<x_{4k+3}$ are real and distinct. 

In the same way one can justify the disposition of the other points 
of the form $qx_j$ w.r.t. the points $x_j$.
\end{proof}

\begin{figure}[htbp]
\centerline{\hbox{\includegraphics[scale=0.7]{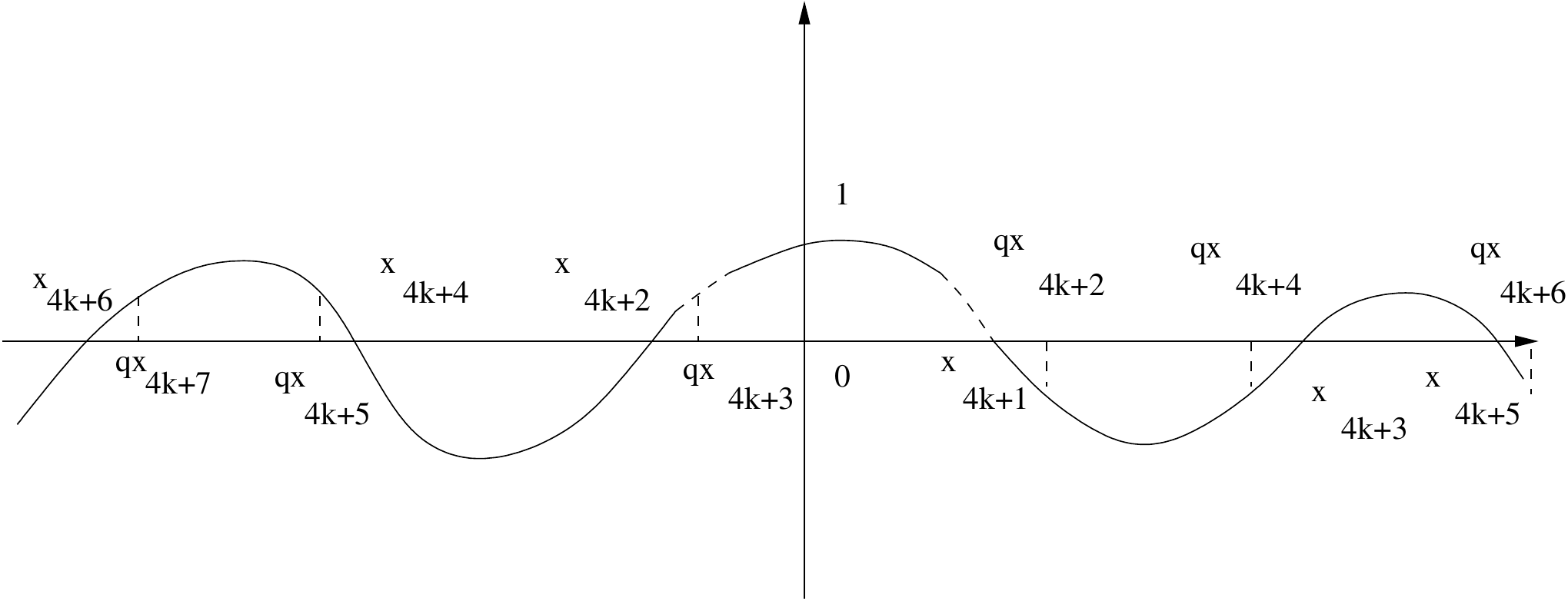}}}
%\includegraphics{thetaqneg.eps}
%   \centerline{\hbox{\epsfxsize=10cm \epsfbox{thetaqneg.eps}}}
    \caption{The real zeros of $\theta$ for $q\in (-1,0)$. }
\label{thetaqneg}
\end{figure}

\begin{prop}\label{firstzero}
The function $\theta (q,.)$ with $q\in (-1,0)$ has a zero in the interval 
$(0,-1/q)$. More precisely, one has $\theta (q,-1/q)<0$.
\end{prop}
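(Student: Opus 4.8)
The plan is to evaluate $\theta(q,x)$ at the specific point $x=-1/q$ and show the value is negative; since Remark~\ref{zerosoftheta}(2) gives $\theta(q,x)>0$ on $[-1,0)$ and $\theta(q,0)=1>0$, while continuity of $\theta(q,\cdot)$ together with $\theta(q,-1/q)<0$ forces a sign change, there must be a zero strictly between $0$ and $-1/q$ (note $-1/q>0$ since $q<0$). So the whole proposition reduces to the single inequality $\theta(q,-1/q)<0$.

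To get that inequality, the natural tool is the functional equation (\ref{FE}): $\theta(q,x)=1+qx\,\theta(q,qx)$. Substituting $x=-1/q$ gives $\theta(q,-1/q)=1-\theta(q,-1)$. So it suffices to prove $\theta(q,-1)>1$ for $q\in(-1,0)$. Now $\theta(q,-1)=\sum_{j=0}^\infty (-1)^j q^{j(j+1)/2}$. I would split this series according to the parity pattern of the exponents' signs, or more directly group consecutive terms in pairs. Writing $a_j:=(-1)^j q^{j(j+1)/2}$, one has $a_0=1$, $a_1=-q$ (which is positive since $q<0$), and in general I would compare $|a_{j+1}|$ with $|a_j|$: the ratio is $|q|^{j+1}$, which is $<1$ for $q\in(-1,0)$. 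Hence $\{|a_j|\}$ is strictly decreasing, but the signs of the $a_j$ are $+,+,-,-,+,+,\dots$ (since $(-1)^j$ alternates and $q^{j(j+1)/2}$ changes sign according to the parity of the triangular number $j(j+1)/2$, which runs $0,1,3,6,10,15,\dots$, i.e. even, odd, odd, even, even, odd, odd, \dots). So the sign pattern is not a simple alternation, and one should instead group the terms as $(a_0+a_1)+(a_2+a_3)+(a_4+a_5)+\cdots$ or peel off $a_0$ and then pair $(a_1+a_2),(a_3+a_4),\dots$; in the latter grouping each pair $(a_{2k-1}+a_{2k})$ has a definite sign and the pairs themselves decrease in modulus, so one can bound the tail and conclude $\theta(q,-1)=1+q\,(\text{something})$-type estimate, ultimately $\theta(q,-1)>1$.

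The main obstacle I anticipate is handling the sign pattern $+,+,-,-,+,+,\dots$ cleanly: it is not an alternating series, so the elementary "alternating series with decreasing terms" argument does not apply verbatim, and one must choose the right pairing and check that the grouped series is genuinely sign-definite and dominated. Concretely, pairing as $a_{4k}+a_{4k+1}$ (two consecutive positive terms, harmless) does not help; the useful move is to write $\theta(q,-1) = \sum_{k\ge 0}\bigl(q^{(4k)(4k+1)/2} - q^{(4k+2)(4k+3)/2}\bigr) + \sum_{k\ge 0}\bigl(q^{(4k+1)(4k+2)/2} - q^{(4k+3)(4k+4)/2}\bigr)$ where I have used $(-1)^j q^{j(j+1)/2}=|q|^{j(j+1)/2}$ or $-|q|^{j(j+1)/2}$ according to the block, so that with $u:=|q|=-q\in(0,1)$ every bracket $u^{m}-u^{m'}$ with $m<m'$ is positive; then $\theta(q,-1)=1+u+\sum(\text{positive brackets beyond the first two})\cdot(\pm)$ — and one checks the very first contribution already exceeds what is subtracted, giving $\theta(q,-1)\ge 1+u-u^3+\cdots>1$. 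An even cleaner route, which I would try first, is to note $\theta(q,-1)=\sum_{j\ge0}(-1)^j q^{j(j+1)/2}$ and use the substitution from (\ref{uq})-type identities, writing the series as $(1-u^3+u^{10}-\cdots)+u(1-u^5+u^{14}-\cdots)$ — exactly the decomposition already displayed in Remark~\ref{zerosoftheta}(2) — where each parenthesis is a genuinely alternating series with first term $1$ and strictly decreasing moduli for $u\in(0,1)$, hence each parenthesis lies in $(0,1)$ with the first exceeding $1-u^3$; then $\theta(q,-1)>(1-u^3)+0>0$ is too weak, so I would instead bound the first parenthesis below by $1-u^3$ and simply note the second is positive, yielding $\theta(q,-1)>1-u^3$, and then observe that one actually needs the sharper $\theta(q,-1)>1$, which follows because the second parenthesis contributes $u\cdot(\text{value in }(1-u^5,1))$ and $u(1-u^5)>u^3$ for $u\in(0,1)$, so $\theta(q,-1)>1-u^3+u^3=1$. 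That last chain of elementary inequalities is the crux and is what I would write out carefully; everything else is immediate from (\ref{FE}) and Remark~\ref{zerosoftheta}(2).
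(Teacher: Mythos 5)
Your reduction is fine and is essentially a reformulation rather than a shortcut: substituting $x=-1/q$ into (\ref{FE}) gives $\theta (q,-1/q)=1-\theta (q,-1)$, so proving $\theta (q,-1)>1$ is exactly equivalent to the paper's inequality $\theta (q,-1/q)<0$ (indeed, written out in $u=|q|$, the series you must show positive, $u-u^3-u^6+u^{10}+u^{15}-u^{21}-\cdots$, is literally the same series the paper denotes $-v\Phi (v)+v^3\Psi (v)$). The gap is in the "crux" chain of elementary inequalities. Your lower bound $\theta (q,-1)>(1-u^3)+u(1-u^5)$ is correct, but the final step $u(1-u^5)>u^3$, i.e. $u^2+u^5<1$, is false for $u$ larger than about $0.81$; so your argument proves the proposition only for $|q|\lesssim 0.8$ and misses precisely the regime $q\to -1^+$ where the statement is actually needed (Lemmas~\ref{absenceofzeros} and~\ref{lm4k+3}, and all spectral values $\bar q_k$ beyond the first two, live there).

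Moreover this is not repairable by keeping more terms or choosing a cleverer pairing of the sign pattern $+,+,-,-,\ldots$: by the Katsnelson limit quoted in Remarks~\ref{remsKa}(4), $\theta (q,-1)\to 1$ as $q\to -1^+$, so the quantity $\theta (q,-1)-1$ you must show positive tends to $0$, and the paired brackets in your groupings are not monotone near $u=1$ (e.g. $u(1-u^2)$ versus $u^6(1-u^4)$, or $u$ versus $u^3+u^6$, the latter exceeding the former near $u=1$), so the alternating-series test fails exactly where it matters. The paper avoids this by writing $\theta (q,-1/q)=\varphi _{1/4}(v^4)-\varphi _{3/4}(v^4)$ and invoking the monotonicity $\partial \varphi _k/\partial k>0$ of Proposition~\ref{phixi}, whose proof rearranges the $k$-derivative into the sums $2\varphi _{k+j}-1$ and uses the nontrivial lower bound $\varphi _{k+j}>\xi _{k+j-1}\geq 1/2$ from \cite{Ko2}. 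You can keep your functional-equation reduction, but you need an input of that strength (some uniform-in-$u$ lower bound valid up to $u=1$) to close the argument; as written, the proof has a genuine gap.
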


\begin{proof}
%We prove first that $\theta (q,-1/q)<0$. 
Setting as above $v=-q$ one gets 

$$\theta (-v,1/v)=-v\Phi (v)+v^3\Psi (v)~~,
~~{\rm where}~~
\Phi (v)=\sum _{j=0}^{\infty}(-1)^jv^{2j^2+3j}~~,~~
\Psi (v)=\sum _{j=0}^{\infty}(-1)^jv^{2j^2+5j}~.$$
Further we use some results of \cite{Ko2}. Consider the functions 
$\varphi _k(\tau ):=\sum _{j=0}^{\infty}(-1)^j\tau ^{kj+j(j-1)/2}=
\theta (\tau ,-\tau ^{k-1})$ 
and $\xi _k(\tau ):=1/(1+\tau ^k)$, 
$\tau \in [0,1)$, $k>0$.

\begin{prop}\label{phixi}
(1) The functions $\varphi _k$ are real analytic on $[0,1)$; when $k>0$, then 
$\varphi _k<\xi _k$; when $k>1$, then $\varphi _k>\xi _{k-1}$; one has 
$\lim _{\tau \rightarrow 1^-}\varphi _k(\tau )=1/2$. 

(2) Consider the function $\varphi _k$ as a function of the two variables 
$(k,\tau )$. One has $\partial \varphi _k/\partial k>0$ for $k>0$. 

(3) For $q\in (0,1)$, $x\in (-q^{-1},\infty )$ one has 
$\partial \theta /\partial x>0$.

(4) For $q\in (0,1)$, $x\in (-q^{-1/2},\infty )$ one has $\theta >1/2$.
\end{prop}

Before proving Proposition~\ref{phixi} 
we finish the proof of Proposition~\ref{firstzero}. Part (2) of 
Proposition~\ref{phixi} implies 
$\varphi _{3/4}(v^4)>\varphi _{1/4}(v^4)$. 
As $-v\Phi (v)=\varphi _{1/4}(v^4)-1$ and 
$-v^3\Psi (v)=\varphi _{3/4}(v^4)-1$, this means that 
$\theta (-v,1/v)<0$, i.e. 
$\theta (q,-1/q)<0$. 
\end{proof}

\begin{proof}[Proof of Proposition~\ref{phixi}:] 

Part (1) of the proposition 
%(and for $x\in [-1,0]$ parts 
%(3) and (4) are 
is proved in \cite{Ko2}.  

To prove part (2) observe that 

$$\begin{array}{rcl}\partial \varphi _k/\partial k&=&
(-\log \tau )\sum _{j=1}^{\infty}(-1)^{j-1}j\tau ^{kj+j(j-1)/2}\\ \\ 
&=&(-\log \tau )\tau ^k\sum _{j=1}^{\infty}(\varphi _{k+j}-\tau ^{k+j}\varphi _{k+j+1}) 
\end{array}$$
As $\varphi _{k+j}-\tau ^{k+j}\varphi _{k+j+1}=2\varphi _{k+j}-1$ and 
(see part (1) of the proposition) as 
$\varphi _{k+j}(\tau )>\xi _{k+j-1}(\tau )\geq 1/2$, each 
difference $\varphi _{k+j}-\tau ^{k+j}\varphi _{k+j+1}$ is positive on $[0,1)$. 
The factors $-\log \tau$ and $\tau ^k$ are also positive.

For $x\in (-q^{-1},0]$ part (3) follows from part (2) and from 
$\varphi _k(\tau )=\theta (\tau ,-\tau ^{k-1})$. Indeed, 
one can represent $x$ as $-\tau ^{k-1}$ for some $k>0$; for fixed $\tau$, 
as $x$ increases, $k$ also increases. One has 

$$0<\partial \varphi _k/\partial k=(-\log \tau)
\partial \theta /\partial x|_{x=-\tau ^{k-1}}~~{\rm and}~~-\log \tau >0~.$$
For $x>0$ part (3) results 
from all coefficients of $\theta (v,x)$ 
considered as a series in $x$ being positive.  

For $x\in (-q^{-1/2},0]$ part (4) follows from part (3). 
Indeed, consider the function 
$\psi :=1+2\sum _{j=1}^{\infty}(-1)^j\tau ^{j^2}$, $\tau \in [0,1)$. 
This function 
is positive valued, decreasing and tending to $0$ as $\tau \rightarrow 1^-$, 
see \cite{Ko1}. As $0<\psi (\tau ^{1/2})/2=\varphi _{1/2}(\tau )-1/2$, for 
$k\geq 1/2$ part (2) of Proposition~\ref{phixi} implies 

$$\theta (\tau ,-\tau ^{k-1})=
\varphi _{k}(\tau )\geq \varphi _{1/2}(\tau )>1/2~.$$ 
For $x=0$ one has $\theta =1$ hence for $x>0$ part (4) follows from part (2). 
\end{proof}

The following lemma follows immediately from the result of 
V.~Katsnelson cited in part (4) of Remarks~\ref{remsKa} and from 
Proposition~\ref{firstzero}:

\begin{lm}\label{absenceofzeros}
For any $\varepsilon >0$ sufficiently small there exists $\delta >0$ such that 
for $q\in (-1,-1+\delta ]$ the function $\theta (q,.)$ has a single real 
zero in the interval 
$(-e^{\pi /2}+\varepsilon ,e^{\pi /2}-\varepsilon )$. 
This zero is simple and positive.
\end{lm}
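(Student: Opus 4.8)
The plan is to combine the limit function supplied by Katsnelson's theorem (part (4) of Remarks~\ref{remsKa}) with the sign information of Proposition~\ref{firstzero}. Write $f(x):=(1-x)/(1+x^2)$. By Remarks~\ref{remsKa}(4), as $q\rightarrow -1^+$ one has $\theta (q,x)\rightarrow f(x)$ for every fixed $x\in (-e^{\pi /2},e^{\pi /2})$. The function $f$ is real-analytic on this interval, it vanishes only at $x=1$, and this zero is simple, because $f'(x)=(x^2-2x-1)/(1+x^2)^2$ gives $f'(1)=-1/2\neq 0$; moreover $f>0$ on $(-e^{\pi /2},1)$ and $f<0$ on $(1,e^{\pi /2})$. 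Thus the assertion of the lemma is evident for the limit function, and the whole point is to transfer it to $\theta (q,.)$ for $q$ close to $-1$.

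The first step is to upgrade the pointwise convergence to uniform convergence on a complex neighbourhood of the real segment. Using equation~(\ref{uq}), $\theta (-v,x)=\theta (v^4,-x^2/v)-vx\,\theta (v^4,-vx^2)$; as $v\rightarrow 1^-$ both arguments $-x^2/v$ and $-vx^2$ tend to $-x^2$, which, for $x$ in a thin complex neighbourhood $U$ of $[-e^{\pi /2}+\varepsilon ,e^{\pi /2}-\varepsilon ]$, lies in a fixed compact subset of the interior of Katsnelson's Jordan curve (because $(e^{\pi /2}-\varepsilon )^2<e^{\pi}$). On that compact set $\theta (v^4,.)\rightarrow 1/(1-\,\cdot\,)$ uniformly, so $\theta (-v,x)\rightarrow 1/(1+x^2)-x/(1+x^2)=f(x)$ uniformly on $U$. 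I would take $U$ symmetric with respect to the real axis and so thin that $x=1$ is the only zero of $f$ in $\overline{U}$.

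Two conclusions then follow. Fix a small $\rho >0$ with $1\pm \rho \in U$. On the compact set $\overline{U}\setminus \{ |x-1|<\rho \}$ the limit $f$ is bounded away from $0$, so for $q$ near $-1$ the function $\theta (q,.)$ has no zero there; in particular it is positive on $(-e^{\pi /2}+\varepsilon ,1-\rho ]$ and negative on $[1+\rho ,e^{\pi /2}-\varepsilon )$. On the disc $|x-1|<\rho$ the Cauchy integral formula turns the uniform convergence of $\theta (q,.)$ into uniform convergence of $\partial \theta /\partial x$ to $f'$; since $f'(1)=-1/2$ I may shrink $\rho$ so that $f'<-1/4$ there, whence $\partial \theta /\partial x<0$ on the real interval $(1-\rho ,1+\rho )$ for $q$ near $-1$. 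Hence $\theta (q,.)$ is strictly decreasing on $(1-\rho ,1+\rho )$ and has at most one zero there, necessarily simple (alternatively, Hurwitz's theorem applied on $U$ gives the same count and simplicity directly). To produce that zero and see it is positive I invoke Proposition~\ref{firstzero}: one has $\theta (q,-1/q)<0$ while $\theta (q,0)=1>0$, so $\theta (q,.)$ has a zero in $(0,-1/q)$; since $-1/q\rightarrow 1^+$, for $q$ close to $-1$ this zero cannot lie in $(0,1-\rho ]$ by the sign information above, hence it lies in $(1-\rho ,1+\rho )$ and coincides with the unique simple zero. Collecting everything, for $q\in (-1,-1+\delta ]$ the only real zero of $\theta (q,.)$ in $(-e^{\pi /2}+\varepsilon ,e^{\pi /2}-\varepsilon )$ is this single simple positive one.

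The delicate point is the second step: justifying that Katsnelson's pointwise limit is in fact \emph{uniform} on a complex neighbourhood (so that $\partial \theta /\partial x$ can be controlled), and that the quadratic substitution $x\mapsto -x^2$ keeps the relevant compact set strictly inside the Jordan curve, uniformly for $v$ near $1$. Everything else is a routine sign-and-monotonicity argument once this convergence, together with Proposition~\ref{firstzero}, is in hand.
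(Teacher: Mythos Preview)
Your proof is correct and follows essentially the same route as the paper, which simply states that the lemma ``follows immediately from the result of V.~Katsnelson cited in part (4) of Remarks~\ref{remsKa} and from Proposition~\ref{firstzero}.'' You have supplied the details the paper omits---in particular the upgrade from pointwise to locally uniform convergence (needed to invoke Hurwitz or to control $\partial\theta/\partial x$), which you rightly flag as the only nontrivial point.
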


\subsection{How do the zeros of $\theta$ coalesce?}

Further we describe the way multiple zeros are formed when $q$ decreases 
from $0$ to $-1$.

\begin{defi}
{\rm We say that the phenomenon A happens before the phenomenon 
B if A takes place for $q=q_1$, B takes place for $q=q_2$ and 
$-1<q_2<q_1<0$. By phenomena we mean that certain zeros of $\theta$ or 
another function coalesce.}
\end{defi} 

\begin{nota}
{\rm We denote by $x_j\prec x_k$ the following statement: 
{\em The zeros $x_k$ and $x_{k+2}$ of $\theta$ can coalesce only after  
$x_j$ and $x_{j+2}$ have coalesced.}}
\end{nota}

\begin{lm}\label{prec}
One has $x_{4k+2}\prec x_{4k+3}$, $x_{4k+2}\prec x_{4k+1}$, 
$x_{4k+3}\prec x_{4k+4}$ and 
$x_{4k+5}\prec x_{4k+4}$, $k\in \mathbb{N}\cup 0$.
%If the zeros $x_{4k+3}$ and $x_{4k+5}$, $k\in \mathbb{N}\cup 0$, 
%coalesce, then this happens after $x_{4k+2}$ and $x_{4k+4}$ do.
\end{lm}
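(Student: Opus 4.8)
The plan is to exploit the functional equation (\ref{FE}) in the same style as the two lemmas immediately preceding this one. The statement $x_j\prec x_k$ means that a coalescence of $x_k$ with $x_{k+2}$ cannot occur while $x_j$ and $x_{j+2}$ are still simple and distinct, so the strategy is: assume $x_k$ and $x_{k+2}$ are about to coalesce (or have already coalesced) at some $q=q_0\in(-1,0)$, and derive that at this $q_0$ the zeros $x_j,x_{j+2}$ must already have merged as well. Concretely, I would track the images $qx_m$ of the real zeros under multiplication by $q$, using the picture established in Figure~\ref{thetaqneg}, together with the identity $0=\theta(q,x_m)=1+qx_m\theta(q,qx_m)$, which forces $\theta(q,qx_m)$ to have a definite sign (the sign of $-1/(qx_m)$) at every real zero $x_m$.

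First I would set up the four cases of the lemma in parallel, since each has the same shape. Take for instance $x_{4k+2}\prec x_{4k+3}$. The key observation is that $q x_{4k+3}$ lies (for $q$ near $0$, hence by continuity as long as the relevant zeros stay real and simple) strictly between two consecutive real zeros, and from the sign of $\theta(q,qx_{4k+3})$ dictated by (\ref{FE}) one reads off exactly which two: $q x_{4k+3}$ sits in the open interval bounded by $x_{4k+1}$ and $x_{4k+2}$ (this is part of the content of Figure~\ref{thetaqneg}). Now if $x_{4k+3}$ and $x_{4k+5}$ were to coalesce at $q=q_0$, then at $q_0$ the point $qx_{4k+3}=q_0x_{4k+3}$ coincides with $q_0x_{4k+5}$; tracking what these two points do as $q$ moves from $0$ to $q_0$, they can only collide if the interval into which each is pinned has itself degenerated, i.e. if $x_{4k+1}$ and $x_{4k+3}$ (equivalently the zeros one index down) have already met. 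Running this bookkeeping carefully gives the implication for each of the four assertions; the indices $4k+1,\dots,4k+5$ are chosen precisely so that the ``squeezing'' interval for $qx_{k}$ is governed by the pair $(x_j,x_{j+2})$ named on the left of $\prec$.

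The main obstacle, and the step I would spend the most care on, is justifying that the qualitative disposition of the points $x_m$ and $qx_m$ of Figure~\ref{thetaqneg} — proved above only for $q\in[-0.108,0)$ and then extended ``by continuity'' — is stable exactly up to the first coalescence of the relevant pair, and in particular that no other real zero can wander into the squeezing interval first. For this I would argue that as $q$ decreases, a real zero $x_m$ can leave the real line only by first colliding with a neighbouring real zero $x_{m\pm2}$; before any such collision the cyclic order of the $x_m$'s and of the products $qx_m$ is preserved, because each $qx_m$ is pinned strictly between two fixed neighbouring zeros by the sign constraint from (\ref{FE}), and a continuous family of points that are each trapped in an open interval between two others cannot change their interleaving without an endpoint collision. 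Combining this stability with the sign analysis above yields all four relations. I would also note Remarks~\ref{zerosoftheta}(2) (no zeros of $\theta(q,\cdot)$ in $[-1,0)$) to rule out spurious zeros near the origin interfering with the $k=0$ cases.
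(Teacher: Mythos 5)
Your strategy is in substance the paper's own: use the functional equation (\ref{FE}) to fix the sign of $\theta (q,qx_m)=-1/(qx_m)$ at each real zero $x_m$, place the points $qx_m$ relative to the zeros as on Fig.~\ref{thetaqneg} for $|q|$ small, extend this disposition by continuity as long as the zeros involved remain real and simple, and then squeeze. Your stability discussion is fine and is essentially the paper's ``by continuity'' step; in fact the observation that $qx_m$ can never cross a zero of $\theta (q,.)$ because $\theta (q,qx_m)\neq 0$ is a clean way to justify it.

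The problem is that the one case you actually carry out is mis-bookkept, and since the whole content of this lemma is precisely four correct chains of inequalities, this is a genuine gap rather than a cosmetic slip. For $x_{4k+2}\prec x_{4k+3}$ the correct placements are: $qx_{4k+3}$ lies between the \emph{consecutive negative} zeros $x_{4k+2}$ and $x_{4k}$, and $qx_{4k+5}$ between $x_{4k+6}$ and $x_{4k+4}$; for $k\geq 1$ the zeros $x_{4k+1}>0$ and $x_{4k+2}<0$ are not consecutive (the interval between them contains $x_{4k},\ldots ,x_2$ and $x_1,\ldots ,x_{4k-1}$), so ``the open interval bounded by $x_{4k+1}$ and $x_{4k+2}$'' pins nothing. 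The relevant chain is $qx_{4k+5}<x_{4k+4}<x_{4k+2}<qx_{4k+3}$: the images of the positive pair sandwich the negative pair from \emph{outside}, so a collision $q_0x_{4k+3}=q_0x_{4k+5}$ forces $x_{4k+2}=x_{4k+4}$. Thus the pair that must have coalesced first is $x_{4k+2},x_{4k+4}$, not $x_{4k+1},x_{4k+3}$ as you conclude; what you derive is at best a different relation than the one asserted, and the ``pinning interval degenerates'' mechanism you invoke does not even apply here. The other three assertions require their own chains, namely $x_{4k+1}<qx_{4k+2}<qx_{4k+4}<x_{4k+3}$, $qx_{4k+4}<x_{4k+3}<x_{4k+5}<qx_{4k+6}$ and $x_{4k+6}<qx_{4k+7}<qx_{4k+5}<x_{4k+4}$, and in two of the four cases the squeeze indeed goes the way you describe (the images of the left-hand pair are trapped inside the interval bounded by the coalescing pair), while in the other two it goes the opposite way; so the case-by-case bookkeeping you defer is exactly the missing proof, and the sample of it you give is incorrect.
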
     

\begin{proof}
The statements follow respectively from 

$$\begin{array}{lcll}
qx_{4k+5}<x_{4k+4}<x_{4k+2}<qx_{4k+3}&,&x_{4k+1}<qx_{4k+2}<qx_{4k+4}<x_{4k+3}&,\\ \\ 
qx_{4k+4}<x_{4k+3}<x_{4k+5}<qx_{4k+6}&~~{\rm and}~~&x_{4k+6}<qx_{4k+7}<qx_{4k+5}<x_{4k+4}&. 
\end{array}$$
\end{proof}

\begin{lm}\label{prec24}
One has $x_{4k+2}\prec x_{4k+6}$, $k\in \mathbb{N}\cup 0$.

%If the zeros $x_{4k+8}$ and $x_{4k+6}$, $k\in \mathbb{N}\cup 0$, coalesce, 
%then this happens after $x_{4k+2}$ and $x_{4k+4}$ do.

%If the zeros $x_{4k+8}$ and $x_{4k+6}$, $k\in \mathbb{N}\cup 0$, coalesce, 
%then this happens after $x_{4k+3}$ and $x_{4k+5}$ do.
\end{lm}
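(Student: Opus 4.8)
My plan is to mimic the proof of Lemma~\ref{prec}: exhibit, for $q$ near $0^{-}$, a chain of strict inequalities between real zeros of $\theta$ and $q$-powers times such zeros, and then follow how, as $q$ decreases, the chain can first be destroyed. First I record that $\prec$ is transitive — it merely compares the $q$-values at which two coalescences occur — so Lemma~\ref{prec}, through $x_{4k+2}\prec x_{4k+3}\prec x_{4k+4}$, $x_{4k+2}\prec x_{4k+1}\prec x_{4k}$ and $x_{4k+6}\prec x_{4k+7}\prec x_{4k+8}$, gives
$$x_{4k+2}\prec x_{4k+4},\qquad x_{4k+2}\prec x_{4k},\qquad x_{4k+6}\prec x_{4k+8}.$$

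The chain I would use is
$$q^{4}x_{4k+8}\ <\ x_{4k+4}\ <\ x_{4k+2}\ <\ q^{4}x_{4k+6}.$$
That it holds for $q$ near $0^{-}$ follows by iterating (\ref{FE}) four times, $\theta(q,x)=1+qx+q^{3}x^{2}+q^{6}x^{3}+q^{10}x^{4}\theta(q,q^{4}x)$: for a zero $x_{j}$ this shows the sign of $\theta(q,q^{4}x_{j})$ to be that of $1+qx_{j}+q^{3}x_{j}^{2}+q^{6}x_{j}^{3}$, and for $|x_{j}|$ large that of $q^{6}x_{j}^{3}$; combined with the alternation of the sign of $\theta$ on the gaps between its consecutive real zeros (the device behind Fig.~\ref{thetaqneg}), this places $q^{4}x_{4k+8}$ just to the left of $x_{4k+4}$ and $q^{4}x_{4k+6}$ just to the right of $x_{4k+2}$. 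I use a \emph{fourth} power because it is even, so that $q^{4}x_{4k+8}=q^{4}x_{4k+6}$ is equivalent to the coalescence $x_{4k+8}=x_{4k+6}$, while $q^{4}x_{4k+6}$ and $q^{4}x_{4k+8}$ fall on opposite sides of the pair $(x_{4k+2},x_{4k+4})$; odd powers of $q$ would throw the negative zeros $x_{4k+6},x_{4k+8}$ onto the positive axis, into one and the same gap, where they cannot be separated.

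Now the monotonicity argument, by contradiction: suppose $(x_{4k+6},x_{4k+8})$ coalesce at some $q=q_{1}$ larger than the value $q_{2}$ at which $(x_{4k+2},x_{4k+4})$ coalesce. By the relations of the first paragraph, on $(q_{1},0)$ none of the pairs $(x_{4k},x_{4k+2})$, $(x_{4k+2},x_{4k+4})$, $(x_{4k+4},x_{4k+6})$, $(x_{4k+6},x_{4k+8})$, $(x_{4k+8},x_{4k+10})$ has coalesced, so $x_{4k+2},x_{4k+4},x_{4k+6},x_{4k+8}$ are real there and the chain holds throughout $(q_{1},0)$ — its only possible failures on that interval being $x_{4k+4}=x_{4k+2}$, impossible since $q_{2}<q_{1}$, or $q^{4}x_{4k+8}=x_{4k+4}$, $x_{4k+2}=q^{4}x_{4k+6}$, which are excluded (see below). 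Letting $q\to q_{1}$, one has $x_{4k+6}=x_{4k+8}$ there, hence $q_{1}^{4}x_{4k+8}=q_{1}^{4}x_{4k+6}$, so the chain collapses to $q_{1}^{4}x_{4k+8}=x_{4k+4}=x_{4k+2}=q_{1}^{4}x_{4k+6}$ at $q_{1}$; but then $q_{1}^{4}x_{4k+8}$ would be a zero of $\theta$, which is excluded since $q_{1}>q_{2}$. Hence no such $q_{1}$ exists, i.e. $x_{4k+2}\prec x_{4k+6}$.

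The one genuinely new difficulty is the exclusion of $q^{4}x_{4k+8}=x_{4k+4}$ and $x_{4k+2}=q^{4}x_{4k+6}$, equivalently: $q^{4}x_{4k+6}$ and $q^{4}x_{4k+8}$ are not zeros of $\theta$ as long as $(x_{4k+2},x_{4k+4})$ have not coalesced. For a single multiple $qx_{j}$ this is automatic, since $\theta(q,qx_{j})=-1/(qx_{j})\neq 0$; but $\theta(q,q^{4}x_{j})=0$ is equivalent to the vanishing at $x_{j}$ of the cubic truncation $1+qx+q^{3}x^{2}+q^{6}x^{3}$ of $\theta$, so one must control the (unique, for $|q|$ small) negative root of that cubic and show it never equals $x_{4k+6}$ or $x_{4k+8}$ in the relevant range — plausibly because this root has modulus $\sim|q|^{-2}$ near $q=0$, far smaller than $|x_{4k+6}|\sim|q|^{-(4k+6)}$, and tends to $-1$ only as $q\to-1^{+}$. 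The remaining point — keeping the indexing of the real zeros straight once some of them have become complex — is routine and is handled by continuity as in Lemma~\ref{prec}.
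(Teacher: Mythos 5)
Your overall strategy (iterate (\ref{FE}) four times, build a four-term chain of inequalities, and run the continuity/contradiction argument of Lemma~\ref{prec}) is the right one, and your chain $q^{4}x_{4k+8}<x_{4k+4}<x_{4k+2}<q^{4}x_{4k+6}$ is in fact correct for small $|q|$. But the proof has a genuine gap, which you yourself flag: for the chain to persist by continuity you must exclude $q^{4}x_{4k+6}$ and $q^{4}x_{4k+8}$ ever becoming zeros of $\theta$, i.e.\ you must show that the cubic $1+qx+q^{3}x^{2}+q^{6}x^{3}$ never vanishes at $x=x_{4k+6}$ or $x=x_{4k+8}$ throughout the whole range of $q$ where these zeros are real. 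Your heuristic (the cubic's negative root has modulus $\sim|q|^{-2}$ near $0$ and tends to $-1$ as $q\to-1^{+}$) is not a proof, and the comparison is less safe than it looks: at $q=-1/2$ the cubic's unique negative root is exactly $-4$, whose modulus is already below $e^{\pi/2}\approx 4.81$, so to separate it from $x_{4k+6}$ you would need uniform lower bounds on $|x_{4k+6}|$ over an interval of $q$ on which you do not yet control the zeros --- essentially the difficulty you set out to resolve. There is also a sign slip in your derivation of the chain: from $0=\theta(q,x_j)=1+qx_j+q^3x_j^2+q^6x_j^3+q^{10}x_j^4\theta(q,q^4x_j)$ the sign of $\theta(q,q^{4}x_{j})$ is the \emph{opposite} of that of the cubic, hence positive for $|x_j|$ large; with your stated (negative) sign, $q^{4}x_{4k+6}$ would land in $(x_{4k+4},x_{4k+2})$, contradicting your own chain.

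The paper sidesteps all of this by applying the four-fold functional equation at the point $x_{4k+2}/q^{4}$ rather than at $q^{4}x_{4k+6}$ and $q^{4}x_{4k+8}$: then the residual term involves $\theta(q,q^{4}\cdot x_{4k+2}/q^{4})=\theta(q,x_{4k+2})=0$, so
$$\theta(q,x_{4k+2}/q^{4})=\left(1+x_{4k+2}^{2}/q^{5}\right)+\left(x_{4k+2}/q^{3}+x_{4k+2}^{3}/q^{6}\right),$$
and both groups are strictly negative using only $q\in(-1,0)$ and $x_{4k+2}<-1$ (part (2) of Remarks~\ref{zerosoftheta}). This gives, unconditionally, a point of $(x_{4k+8},x_{4k+6})$ at which $\theta<0$, which keeps $x_{4k+6}$ and $x_{4k+8}$ real and distinct for as long as $x_{4k+2}$ is real --- no exclusion of degenerate equalities is needed, because the strict negativity is itself the exclusion. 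If you want to salvage your version, the fix is precisely to trade your two outer test points for this single inner one.
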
     

\begin{proof}
Indeed, equation (\ref{FE}) implies the following one:

\begin{equation}\label{fourfold}
\theta (q,x)=1+qx+q^3x^2+q^6x^3+q^{10}x^4\theta (q,q^4x)~.
\end{equation}
For $x=x_{4k+2}/q^4$ one obtains 

$$\theta (q,x_{4k+2}/q^4)=
1+x_{4k+2}/q^3+x_{4k+2}^2/q^5+x_{4k+2}^3/q^6=(1+x_{4k+2}^2/q^5)+(x_{4k+2}/q^3
+x_{4k+2}^3/q^6)~.$$
Each of the two sums is negative due to $q\in (-1,0)$, $x_{4k+2}<-1$ 
(see part (2) of Remarks~\ref{zerosoftheta}). 
For small values of $|q|$ one has $x_{4k+2}/q^4\in (x_{4k+8},x_{4k+6})$ 
because $x_j=-q^{-j}(1+O(q))$ and $\theta (q,x_{4k+2}/q^4)<0$. By continuity 
this holds true for all $q\in (-1,0)$ for which the zeros 
$x_{4k+8}$, $x_{4k+6}$, $x_{4k+4}$ 
and $x_{4k+2}$ are real and distinct. Hence if $x_{4k+2}$ 
and $x_{4k+4}$ have not coalesced, then $x_{4k+6}$ 
and $x_{4k+8}$ are real and distinct. 
\end{proof}

\begin{rems}\label{irreversible}
{\rm (1) Recall that for $q\in (-1,0)$ we set $v:=-q$ and that 
equation (\ref{uq}) holds true.} 

%Hence $v\in (0,1)$ and} 
%\begin{equation}\label{sigma}
%\theta (q,x)=1-vx-v^3x^2+v^6x^3+v^{10}x^4-\cdots 
%=\theta (v^4,-x^2/v)-vx\theta (v^4,-vx^2)
%\end{equation}

{\rm (2) Equation (\ref{DE}) implies that the values of $\theta$ 
at its local maxima decrease and its values at local minima increase as $q$ 
decreases in $(-1,0)$. Indeed, 
at a critical point one has $\partial \theta /\partial x=0$, 
so $2q\partial \theta /\partial q=x^2\partial ^2\theta /\partial x^2$. 
At a minimum one has $\partial ^2\theta /\partial x^2\geq 0$, so 
$\partial \theta /\partial q\leq 0$ and the value of $\theta$ increases as 
$q$ decreases; similarly for a maximum. 

(3) In particular, this means that $\theta$ can only lose real zeros, 
but not acquire such as $q$ decreases on $(-1,0)$. Indeed, the zeros of 
$\theta$ depend continuously on $q$. If at some point of the real axis a new 
zero of even multiplicity appears, then it cannot be a maximum because 
the critical value must decrease and it cannot be a minimum 
because its value must increase. 

(4) To treat the cases of odd multiplicities 
of the zeros it suffices to differentiate both sides of 
equation (\ref{DE}) w.r.t. $x$. 
For example, a simple zero $x_0$ of $\theta$ cannot become a triple one because 

$$2q\partial /\partial q(\partial \theta /\partial x)=
2\partial \theta /\partial x+4x\partial ^2\theta /\partial x^2+
x^2\partial ^3\theta /\partial x^3$$
which means that as   
$\partial \theta /\partial x|_{x=x_0}=\partial ^2\theta /\partial x^2|_{x=x_0}=0$,  
%at a triple zero of $\theta$, 
then either  
$\partial ^3\theta /\partial x^3|_{x=x_0}>0$ hence 
in a neighbourhood of $x_0$ one has $\partial \theta /\partial x\geq 0$ 
and $\partial /\partial q(\partial \theta /\partial x)<0$, or 
$\partial ^3\theta /\partial x^3|_{x=x_0}<0$ hence 
$\partial \theta /\partial x\leq 0$
and $\partial /\partial q(\partial \theta /\partial x)>0$ (in a neighbourhood 
of $x_0$), so in both cases 
the triple zero bifurcates into a simple one and a complex pair as 
$q$ decreases. The case of a zero of multiplicity $2m+1$, $m\in \mathbb{N}$, 
is treated by analogy. 

%A simple zero of $\theta$ cannot become a zero of multiplicity $2m+1$, 
%$m\in \mathbb{N}$, because 

%$$2q\partial /\partial q(\partial \theta /\partial x)=
%4x\partial ^2\theta /\partial x^2+
%x^2\partial ^3\theta /\partial x^3$$

(5) In equation (\ref{uq}) the first argument (i.e. $v^4$) of both 
functions $\theta (v^4,-x^2/v)$ and 
$\theta (v^4,-vx^2)$ is the same, so when one of them has a double 
zero, then they both have double zeros. If the double zero of the first one 
is at $x=a$, then the one of the second is at $x=a/v$. }
\end{rems}
 
\begin{prop}\label{coalesceeven}
For any $k\in \mathbb{N}\cup 0$ there exists $q^*_k\in (-1,0)$ such that 
for $q=q^*_k$ the zeros $x_{4k+2}$ and $x_{4k+4}$ coalesce.
\end{prop}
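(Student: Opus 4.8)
The plan is to track the pair of real negative zeros $x_{4k+2}$ and $x_{4k+4}$ as $q$ decreases from $-0.108$ (where, by Lemma~\ref{lm108}, all zeros of $\theta(q,\cdot)$ are real and simple, and the labelling is consistent with $x_j$ close to $-q^{-j}$) toward $-1$, and to show they must collide before $q$ reaches $-1$. By Remarks~\ref{irreversible}(3), as $q$ decreases on $(-1,0)$ the function $\theta(q,\cdot)$ can only lose real zeros, never gain them; and by Lemma~\ref{absenceofzeros} (a consequence of Katsnelson's result and Proposition~\ref{firstzero}), for $q$ sufficiently close to $-1^+$ the function $\theta(q,\cdot)$ has \emph{only one} real zero in $(-e^{\pi/2}+\varepsilon, e^{\pi/2}-\varepsilon)$, and that zero is simple and positive. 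In particular, for $q$ near $-1$ there is no negative real zero of $\theta(q,\cdot)$ in a fixed neighbourhood of the origin. Since for small $|q|$ the zeros $x_{4k+2}$, $x_{4k+4}$ are negative and finite (close to $-q^{-4k-2}$, $-q^{-4k-4}$), I want to argue they are ``trapped'' and forced to merge.

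The key step is a localization argument showing that $x_{4k+2}$ and $x_{4k+4}$ cannot escape to $-\infty$ and cannot pass each other, so the only way the count of negative real zeros can drop (as Lemma~\ref{absenceofzeros} demands it must) is for these two to coalesce. First I would use Lemma~\ref{prec24} and Lemma~\ref{prec}: these $\prec$-relations say that $x_{4k+2}$ and $x_{4k+4}$ must coalesce before $x_{4k+6}$ and $x_{4k+8}$ can (so the ``innermost'' negative pair of this residue class disappears first), and also before neighbouring pairs such as $x_{4k+1},x_{4k+3}$ and $x_{4k+3},x_{4k+5}$. Combined with Remarks~\ref{zerosoftheta}(2) (no zeros in $[-1,0)$) and the ordering on Fig.~\ref{thetaqneg}, this confines $x_{4k+2}$ to a bounded region of the negative axis as long as it stays real: it cannot cross $-1$, and it cannot be overtaken by $x_{4k+4}$ from the left without them first becoming equal. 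So as $q\to -1^+$, if $x_{4k+2}$ remained real it would have to tend to a finite negative limit, contradicting Lemma~\ref{absenceofzeros}, which forbids any negative real zero in a fixed interval around $0$ near $q=-1$. Hence $x_{4k+2}$ leaves the real axis at some $q=q_k^*\in(-1,0)$; by Remarks~\ref{irreversible}(2)--(4), a real zero can only leave the real line by colliding with another real zero (simple zeros stay simple; a zero of even multiplicity would violate the monotonicity of critical values), and by the $\prec$-ordering the partner it collides with is precisely $x_{4k+4}$.

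I would then tidy up the two boundary points of the process. At $q=-0.108$ the two zeros are real, distinct, and strictly ordered ($x_{4k+4}<x_{4k+2}<0$) by Lemma~\ref{lm108}; as $q$ decreases they vary continuously and, by the argument above, stay real and distinct on a maximal interval $(q_k^*, -0.108]$ with $\theta(q_k^*,\cdot)$ having a double zero where $x_{4k+2}$ and $x_{4k+4}$ merge. For $k=0$ one should check separately (using Proposition~\ref{firstzero}, $\theta(q,-1/q)<0$, together with the ordering on Fig.~\ref{thetaqneg}) that $x_2$ and $x_4$ are indeed the relevant colliding pair and that they lie to the left of $x_1\in(0,-1/q)$. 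The main obstacle I expect is making the ``trapping'' rigorous: one must rule out the possibility that $x_{4k+2}$ and $x_{4k+4}$ both march off toward $-\infty$ while staying real and distinct, i.e. that the real zero simply ``escapes through infinity'' rather than colliding. This is handled by part~(1) of Theorem~\ref{fundtm}: all but finitely many zeros of $\theta(q,\cdot)$ are close to $-q^{-k}$, which for fixed $q$ stay in a fixed geometric progression, so only finitely many zeros can have large modulus, and the escape-to-infinity scenario is incompatible with the $\prec$-chain forcing $x_{4k+2}\prec x_{4k+6}\prec x_{4k+10}\prec\cdots$ to collapse from the inside out as $q\to -1$ while the total real-zero count in any bounded interval stays finite. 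Once escape is excluded, the contradiction with Lemma~\ref{absenceofzeros} closes the argument.
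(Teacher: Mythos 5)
Your overall strategy (track the pair $x_{4k+2},x_{4k+4}$ as $q$ decreases and force a collision by a counting argument) is not the paper's route, and unfortunately it has a genuine gap exactly at the point you yourself flag: nothing in the facts you invoke shows that a coalescence must happen at all. Lemma~\ref{absenceofzeros} only forbids negative real zeros in the \emph{bounded} window $(-e^{\pi /2}+\varepsilon ,e^{\pi /2}-\varepsilon )$ for $q$ close to $-1$; it says nothing about the half-line to the left of $-e^{\pi /2}+\varepsilon$. So the scenario to be excluded is not only ``escape to $-\infty$'' but the milder one in which $x_{4k+2}$ and $x_{4k+4}$ stay real, simple and distinct with moduli remaining above $e^{\pi /2}-\varepsilon$ for all $q\in (-1,0)$. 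Your appeal to part (1) of Theorem~\ref{fundtm} cannot rule this out: that statement is asymptotic in $k$ for each \emph{fixed} $q$, the number of exceptional zeros is not uniform as $q\to -1^+$, and since $|q|^{-j}\to 1$ for fixed $j$, arbitrarily many zeros of the form $\approx -q^{-j}$ have moduli in any fixed interval just beyond $e^{\pi /2}$ when $q$ is close to $-1$ — so there is plenty of room there for the tracked pair to survive. Likewise the relations $\prec$ of Lemmas~\ref{prec} and~\ref{prec24} are purely conditional (``$B$ cannot happen before $A$''); they order possible coalescences but never force one to occur. Your sentence ``if $x_{4k+2}$ remained real it would have to tend to a finite negative limit'' inside the Katsnelson window is precisely the unproved step. (Note also that the count of complex pairs growing as $q\to -1^+$ is part (4) of Theorem~\ref{new}, which is deduced \emph{from} this proposition, so it cannot be used here without circularity.)

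The paper's proof supplies exactly the missing existence mechanism, and does so by a different device: the decomposition (\ref{uq}), $\theta (-v,x)=\psi _1+\psi _2$ with $\psi _1=\theta (v^4,-x^2/v)$ even and $\psi _2=-vx\theta (v^4,-vx^2)$ odd. For small $v$ the signs of $\psi _1,\psi _2$ cut the negative half-axis into interlacing zones marked $+$ and $-$, with exactly one zero of $\theta (-v,\cdot )$ between consecutive zones of opposite sign. By Theorem~\ref{known} applied to $\theta (v^4,\cdot )$, as $v$ increases to $1$ there are countably many values $v=\tilde{q}_k^{1/4}$ at which $\psi _1$ or $\psi _2$ loses its rightmost negativity region on the negative half-line (its rightmost real zero is a double zero, a local minimum, which then goes complex). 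Each such loss removes two consecutive sign changes of $\theta (-v,\cdot )$, hence forces two consecutive negative zeros of $\theta$ to coalesce; countably many losses give all the $q^*_k$. In short, the known positive-$q$ spectrum $\{\tilde{q}_k\}$ is what guarantees that coalescences actually occur — this is the ingredient your trapping argument lacks, and without it (or some substitute controlling the real zeros outside the Katsnelson window uniformly as $q\to -1^+$) the proposal does not close.
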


\begin{figure}[htbp]
\centerline{\hbox{\includegraphics[scale=0.7]{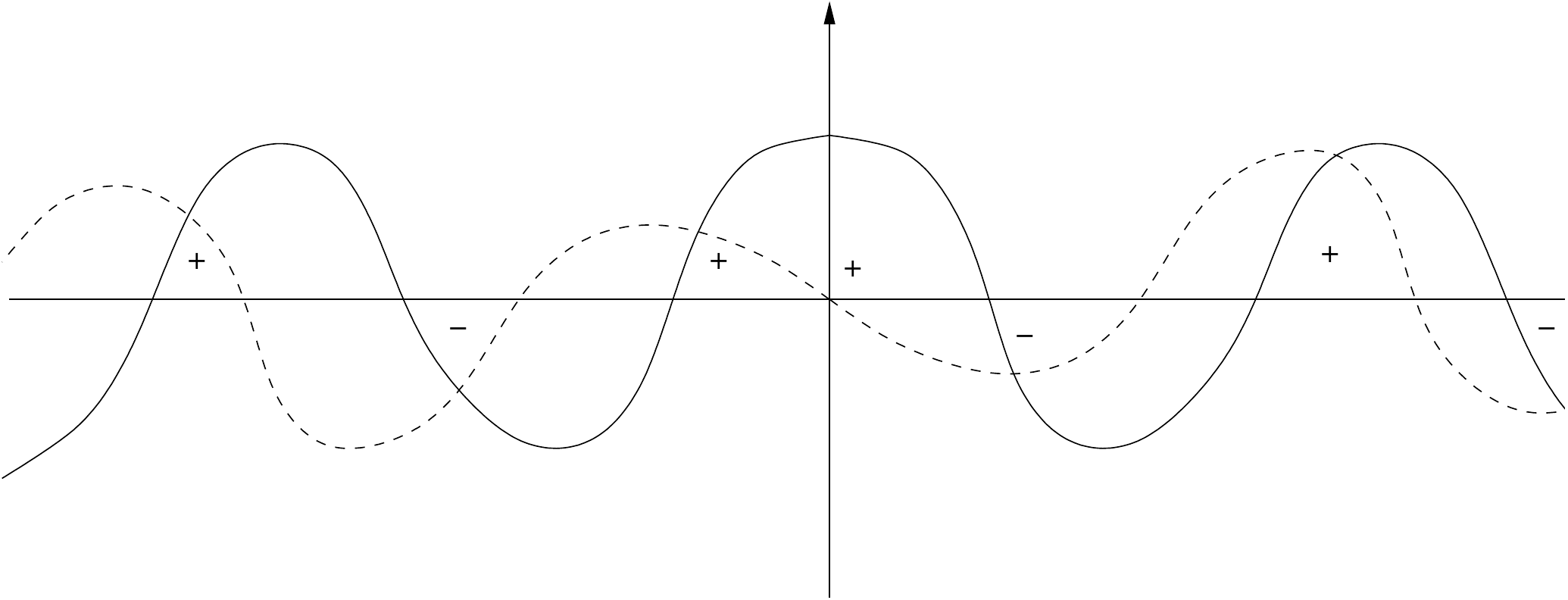}}}
%\includegraphics{parthetatwographs.eps}
%   \centerline{\hbox{\epsfxsize=10cm \epsfbox{parthetatwographs.eps}}}
    \caption{The graphs of $\psi _1$ (solid line) and 
$\psi _2$ (dashed line). }
\label{parthetatwographs}
\end{figure}

\begin{nota}\label{psinota}
{\rm We denote by $\psi _1$ and $\psi _2$ the functions 
$\theta (v^4,-x^2/v)$ and 
$-vx\theta (v^4,-vx^2)$. By $y_{\pm k}$ and $z_{\pm k}$ we 
denote their real zeros for $v^4\in (0,0.108]$, 
their moduli increasing with $k\in \mathbb{N}$, $y_{k}>0$, 
$y_{-k}<0$, $z_{k}>0$, $z_{-k}<0$. We set $z_0=0$.}
\end{nota}

\begin{proof}[Proof of Proposition~\ref{coalesceeven}:]
On Fig.~\ref{parthetatwographs} we show for $v^4\in (0,0.108]$ 
how the graphs of the functions 
$\psi _1$ and 
$\psi _2$ (drawn in solid and dashed line 
respectively) look like, the former being even and the latter odd, 
see part (1) of Remarks~\ref{irreversible}. The signs $+$ and $-$ indicate 
places, where it is certain that their sum $\theta (-v,x)$ 
is positive or negative. It is 
positive (resp. negative) if both functions are of this sign. It is positive 
near the origin because $\psi _1|_{x=0}>0$ while 
$\psi _2|_{x=0}=0$. 

For values of $v$ close to $0$ the zeros $y_{\pm k}$ of 
$\psi _1$ (resp. the zeros $z_{\pm k}$ of $\psi _2/x$) 
are close to the numbers $\pm v^{-(4k-1)/2}$ 
(resp. $\pm v^{-(4k+1)/2}$), 
$k\in \mathbb{N}$, see part (1) of Remarks~\ref{zerosoftheta}. From these 
remarks follows also that for small positive values of $v$ the 
zeros of $\theta (-v,x)$ are close to the numbers 
$-v^{-k}$, $k\in \mathbb{N}$. Hence on the negative half-axis $x$ 
one obtains the following arrangement of these numbers:

$$\begin{array}{cccccccccc}
\cdots &<-v^{-11/2}<&
-v^{-9/2}<&-v^{-4}&<-v^{-7/2}&<
-v^{-5/2}<&-v^{-2}&<-v^{-3/2}<0~.\\
&y_{-5}&z_{-3}&x_4&y_{-3}&z_{-1}&x_2&y_{-1}&\end{array}$$
Hence for $v^4\in (0,0.108]$ between a zone marked by a $+$ 
and one marked by a $-$ there is exactly one zero of $\theta (-v,.)$. 

As $v$ increases from $0$ to $1$, it takes countably-many values 
at each of which one of the functions 
$\psi _1$ and $\psi _2$ (in turn) has a double zero and when the value is 
passed, this double zero becomes a conjugate pair, 
see Theorem~\ref{known}. Hence on the negative real half-line 
the regions, where the corresponding function 
$\psi _1$ or $\psi _2$ is negative, disappear one by one starting 
from the right. As two consecutive changes of sign of $\theta (-v,.)$ 
are lost, $\theta (-v,.)$ has a couple of 
consecutive real negative zeros replaced by 
a complex conjugate pair. The quantity of these losses being countable implies 
the proposition.
\end{proof}

\begin{prop}\label{finmany}
Any interval of the form $[\gamma ,0]$, $\gamma \in (-1,0)$, contains at 
most finitely many spectral values of $q$.
\end{prop}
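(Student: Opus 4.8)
The plan is to show that the spectral values in $[\gamma,0]$ are isolated and that they cannot accumulate anywhere in this compact interval, so there are only finitely many of them. The key tool is part (4) of Theorem~\ref{fundtm}: for each fixed $q$ with $0<|q|<1$ the function $\theta(q,\cdot)$ has at most finitely many multiple zeros, together with the product representation $\theta(q,x)=\prod_k(1+x/x_k(q))$ and the fact (Theorem~\ref{fundtm}(1)) that for each fixed $q$ all but finitely many zeros $\zeta_k$ satisfy $|\zeta_k+q^{-k}|\to 0$. The first step is to record that the zeros of $\theta(q,\cdot)$ depend continuously on $q$, and more precisely that on any compact subinterval $[\gamma,0]$ there is a uniform bound: there is $R=R(\gamma)$ and an index $N=N(\gamma)$ such that for all $q\in[\gamma,0]$ every zero of $\theta(q,\cdot)$ of modulus $\le R$ is among the first $N$ zeros, and all zeros of modulus $>R$ are simple (being close to the geometric sequence $-q^{-k}$, whose consecutive terms are far apart when $|q|$ is bounded away from $1$). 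This reduces the whole question to the behavior of finitely many zeros.

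The second step is a compactness/accumulation argument. Suppose, for contradiction, that $[\gamma,0]$ contains infinitely many spectral values; then they accumulate at some $q_0\in[\gamma,0]$. Pass to a subsequence $q_n\to q_0$ with $q_n$ spectral, and for each $n$ pick a multiple zero $w_n$ of $\theta(q_n,\cdot)$. By the uniform bound from the first step, $|w_n|\le R$, so after passing to a further subsequence $w_n\to w_0$ with $|w_0|\le R$; by continuity $w_0$ is a zero of $\theta(q_0,\cdot)$. Now distinguish whether $w_0$ is a simple or a multiple zero of $\theta(q_0,\cdot)$. If it is a multiple zero of multiplicity $m\ge 2$, then by Hurwitz's theorem a small disc around $w_0$ contains exactly $m$ zeros of $\theta(q_n,\cdot)$ for $n$ large, which is consistent only if those $m$ zeros are spread out; but the crucial point — and here I would invoke the differential equation (\ref{DE}) exactly as in Remarks~\ref{irreversible}(2)--(4) — is that once a real multiple zero is destroyed it cannot reappear, and for the complex case one argues that a spectral value $q_0$ is isolated because $\theta$ and $\partial\theta/\partial x$ cannot vanish simultaneously along a whole sequence at points converging to a single zero of $\theta(q_0,\cdot)$ of \emph{fixed} multiplicity. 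More robustly: if $w_0$ is a simple zero of $\theta(q_0,\cdot)$, then $\partial\theta/\partial x(q_0,w_0)\ne 0$, and by the implicit function theorem the zero near $w_0$ is a simple analytic function of $q$ for $q$ near $q_0$, contradicting $w_n$ being multiple; if $w_0$ is a multiple zero, we have only finitely many candidate $w_0$'s (by Theorem~\ref{fundtm}(4)), so it suffices to show each such $w_0$ is itself an isolated spectral point.

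The third step therefore isolates a single multiple zero: if $\theta(q_0,w_0)=\partial\theta/\partial x(q_0,w_0)=0$ with $w_0$ of multiplicity exactly $m$, consider the analytic function $F(q,x):=\partial^{m-1}\theta/\partial x^{m-1}(q,x)$, which has a simple zero in $x$ at $(q_0,w_0)$ (since the $m$-th derivative there is nonzero for $m$ the exact multiplicity); the implicit function theorem gives an analytic branch $x=\omega(q)$ of this zero, and the condition that $\theta(q,\cdot)$ has an $m$-fold (or higher) zero near $w_0$ becomes the vanishing of the analytic function $q\mapsto \partial^{m-2}\theta/\partial x^{m-2}(q,\omega(q))$, which either is identically zero or has isolated zeros near $q_0$. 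Ruling out the identically-zero case is where the differential equation (\ref{DE}) does real work — along a hypothetical curve of multiple zeros the monotonicity of critical values forced by (\ref{DE}) (increasing at minima, decreasing at maxima, with the analogous statements for odd multiplicities obtained by differentiating (\ref{DE})) is violated. The main obstacle I anticipate is precisely this last point for \emph{non-real} multiple zeros, where the sign arguments of Remarks~\ref{irreversible} do not directly apply; I would handle it by noting that a non-real multiple zero $w_0$ comes with its conjugate $\bar w_0$, reducing to the real polynomial factor $R(q,\cdot)$ of Theorem~\ref{fundtm}(5), whose discriminant is a real-analytic (indeed, for $q$ in a neighborhood, real-analytic in $q$) function that is not identically zero on $[\gamma,0]$ — e.g. because at $q=0$, or at any $q\in[-0.108,0)$ by Lemma~\ref{lm108}, $\theta(q,\cdot)$ has no multiple zeros at all — so its zero set in $[\gamma,0]$ is finite. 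Combining the finitely many candidate multiple zeros, each contributing finitely many spectral values, yields the proposition.
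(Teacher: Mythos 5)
Your overall strategy (compactness on $[\gamma,0]$ plus local analyticity in $q$ of a discriminant-type function) is genuinely different from the paper's, but as written it has two load-bearing gaps. First, your Step 1 --- a radius $R(\gamma)$ and index $N(\gamma)$ such that, \emph{uniformly} for all $q\in[\gamma,0]$, every zero of modulus $>R$ is simple --- is asserted, not proved, and it does not follow from what you cite: parts (1) and (4) of Theorem~\ref{fundtm} are statements for each \emph{fixed} $q$, and no uniform-in-$q$ version is available in the paper. Without it, Step 2 does not start: the multiple zeros $w_n$ of $\theta(q_n,\cdot)$ could a priori escape to infinity as $q_n\to q_0$, leaving no limit point $w_0$ to which to apply the implicit function theorem. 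Supplying exactly this uniform control is what the paper's own proof does by a different mechanism: it writes $\theta(q,x)=\psi_1+\psi_2$ via (\ref{uq}), observes that for $v^4\le\gamma ^4<1$ each of $\psi_1,\psi_2$ loses real zeros only finitely many times (Theorem~\ref{known}, the lost zero always being the one nearest the origin), so outside a fixed compact interval the alternating sign zones persist for all $q\in[\gamma,0]$ and pin down simple real zeros of $\theta$ there; together with the irreversibility of Remarks~\ref{irreversible}(3) this confines all coalescences to finitely many zeros. So the one step you take for granted is precisely the step the paper's argument exists to provide.

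Second, your treatment of non-real multiple zeros does not work as stated. The factor $R(q,\cdot)$ of Theorem~\ref{fundtm}(5) is defined for each fixed $q$ and its degree $2j$ jumps by $2$ exactly at the spectral values you are trying to count, so there is no globally defined, real-analytic ``discriminant of $R$'' on $[\gamma,0]$ to which the argument ``analytic, nonzero at some point (e.g. by Lemma~\ref{lm108}), hence finitely many zeros on a compact interval'' can be applied. Locally one can take the discriminant of a Weierstrass polynomial near $(q_0,w_0)$, which is analytic in $q$ near $q_0$; but then the whole burden is to exclude that it vanishes identically (a continuum of spectral values with complex double zeros), and the monotonicity coming from (\ref{DE}) says nothing about non-real critical points --- indeed Remark~\ref{remsKa}(1) leaves open whether complex multiple zeros occur at all. (To be fair, the paper's proof also tacitly treats only real coalescences, but your write-up makes the complex case an explicit step and the justification offered for it fails.) A smaller technical point: the condition ``$\theta(q,\cdot)$ has an $m$-fold zero near $w_0$'' is not the vanishing of $\partial^{m-2}\theta/\partial x^{m-2}(q,\omega(q))$ --- an $m$-fold zero imposes $m-1$ conditions, and in any case nearby spectral values produce double zeros, not $m$-fold ones; the correct object is the discriminant of the degree-$m$ Weierstrass factor. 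Your use of (\ref{DE}) for a real double zero (nondegenerate critical point with critical value $0$, and the critical value strictly monotone in $q$, so such $q$ are isolated) is sound and close in spirit to Remarks~\ref{irreversible}, but the proposal as a whole does not yet prove the proposition.
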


\begin{proof}
Indeed, if $\gamma \geq -0.108$, the interval contains 
no spectral value of $q$, 
see Lemma~\ref{lm108}; all zeros of $\theta$ are real and 
distinct and the graphs of 
the functions $\psi _{1,2}$ look as shown on Fig.~\ref{parthetatwographs}. 

Suppose that $\gamma <-0.108$. When $q$ decreases in $[\gamma ,0]$, for each of 
the functions this happens at most finitely many times that it has a double 
zero which then gives birth to a complex conjugate pair. 
This is always the zero which is closest to $0$, 
see part (2) of Theorem~\ref{known}. 

Hence the presentation of the graphs of $\psi _{1,2}$ 
changes only on some closed interval containing $0$, but outside it the zones 
marked by $+$ and $-$ continue to exist (but their borders change continuously) 
and the simple zeros of $\theta$ 
that are to be found between two such consecutive zones of opposite signs
are still to be found there. Besides, no new real zeros appear, see part (3) 
of Remarks~\ref{irreversible}. 

Therefore there exists $s_0\in \mathbb{N}$ such 
that when $q$ decreases from $0$ to $\gamma$, the zeros $x_{s_0}$, $x_{s_0+1}$, 
$x_{s_0+2}$, $\ldots$ remain simple and depend continuously on $q$. Hence 
only the rest of the zeros (i.e. $x_1$, $\ldots$, $x_{s_0-1}$) 
can participate in the bifurcations.
\end{proof}

\begin{prop}\label{onlydouble}
For $q\in (-1,0)$ the function $\theta (q,.)$ can have only simple 
and double real zeros. Positive double zeros are local maxima and negative 
double zeros are local minima.
\end{prop}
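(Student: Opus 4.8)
The plan is to use the monotonicity facts already assembled in Remarks~\ref{irreversible}, parts (2)--(4), which are consequences of the differential equation (\ref{DE}). First I would argue that no real zero of multiplicity $\geq 3$ can ever occur for $q\in(-1,0)$. By Lemma~\ref{lm108} all zeros are real and simple for $q\in[-0.108,0)$, so as $q$ decreases from $0$ the zeros evolve continuously and multiple real zeros can only be created by coalescence. A zero of even multiplicity $2m$ is a local extremum of $\theta(q,.)$; a zero of odd multiplicity $2m+1$ with $m\geq 1$ has $\partial\theta/\partial x$ vanishing to order $2m$ at that point, i.e. it is a local extremum of $\partial\theta/\partial x$. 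Differentiating (\ref{DE}) the appropriate number of times (as indicated in part (4) of Remarks~\ref{irreversible} for the triple case, and "by analogy" for higher odd multiplicities) shows that at such a point the relevant derivative of $\theta$ with respect to $q$ has a definite sign forcing the critical value of the appropriate derivative to move in one direction only, so the configuration is immediately destroyed into a simple zero plus a complex pair and cannot have been reached from a simpler real configuration as $q$ decreased. The same argument rules out multiplicity $\geq 3$ in the first place: such a zero could only arise from lower-multiplicity real zeros merging, and the sign of $\partial\theta/\partial q$ (or its $x$-derivatives) is incompatible with this.

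Next, having reduced to multiplicities $1$ and $2$, I would pin down the type of the double zeros. A double zero $x_0$ is a critical point with $\partial^2\theta/\partial x^2|_{x_0}\neq 0$, hence either a strict local minimum or a strict local maximum. If $x_0>0$, then since all Maclaurin coefficients of $\theta(v,x)=\theta(-q,x)$ — here I mean the series $\sum q^{j(j+1)/2}x^j$ — have the sign pattern making $\theta$ a sum over even $j$ plus $qx$ times a sum over the shifted indices, I would instead use the cleaner route: by part (3) of Proposition~\ref{phixi} one has $\partial\theta/\partial x>0$ for $q\in(0,1)$, $x>-q^{-1}$; transported via (\ref{uq}) this controls the even and odd parts, but more directly, for $x>0$ all coefficients $q^{j(j+1)/2}x^j$ of the series in $x$ are... not of one sign. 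So I would argue geometrically instead: by Remark~\ref{irreversible}(2), critical values at minima increase and at maxima decrease as $q$ decreases. A double zero is born when a local max and adjacent local min of $\theta$ collide; at the moment of collision the common critical value is $0$. For this to happen the max's value (which was decreasing) must have come down to $0$ from above and the min's value (increasing) must have come up to $0$ from below, so the graph near $x_0$ looks like $+\!\to 0\!\to +$ crossing through a min-type flattening on the side where $\theta$ was positive, or $-\!\to 0\!\to -$ on the side where it was negative.

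To decide the sign I would use the arrangement of signs of $\theta(-v,.)$ established in the proof of Proposition~\ref{coalesceeven} together with Remarks~\ref{zerosoftheta}. On the positive half-axis all coefficients of the series $\sum q^{j(j+1)/2}x^j$ in $x$ are... again not uniform in sign for $q<0$. The correct observation is: in the notation of the proof of Proposition~\ref{coalesceeven}, the double zeros on the positive half-axis occur inside a region marked $+$ flanked by $-$ regions on a scale where the local picture is a narrow downward bump touching zero, i.e. a local maximum; symmetrically the negative double zeros sit in a $-$ region flanked by $+$ regions and so are local minima. Concretely: from Remarks~\ref{zerosoftheta}(2), $\theta(q,.)>0$ on $[-1,0)$, and $\theta(q,0)=1>0$, so the first positive zero $x_1$ is a crossing from $+$ to $-$; between $x_1$ and $x_3$ the function is negative except possibly near where a bump forms, and when $x_1$ and $x_3$ coalesce the double zero is approached from a region where $\theta<0$ on both sides, which makes it a local maximum (the graph touches $0$ from below). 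The analogous bookkeeping along the negative axis, using $x_2<-1$ and the sign pattern, gives that negative double zeros are local minima.

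The main obstacle I anticipate is precisely this last sign determination: one must be sure that the double zero, at the instant of its formation, is flanked on \emph{both} sides by a region where $\theta$ has the \emph{same} sign — positive for the negative-axis case (giving a minimum) and negative for the positive-axis case (giving a maximum) — and this requires tracking which consecutive pair of same-parity real zeros is the one that disappears, i.e. the one "closest to $0$" among those still able to bifurcate (cf. part (2) of Theorem~\ref{known} and the proof of Proposition~\ref{finmany}). Making that "closest to $0$" claim precise, and matching it against the $+/-$ chart of Fig.~\ref{parthetatwographs} propagated to general $q$ via Proposition~\ref{finmany}, is the delicate part; the multiplicity-$\leq 2$ reduction via (\ref{DE}) is, by contrast, routine once part (4) of Remarks~\ref{irreversible} is in hand.
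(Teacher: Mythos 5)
There is a genuine gap, in fact two. First, your reduction to multiplicity $\leq 2$ rests entirely on the monotonicity of critical values coming from (\ref{DE}) (Remarks~\ref{irreversible}), but that alone does not exclude, say, four real zeros merging into a quadruple zero: in such a merger the intermediate local maximum (positive value) decreases to $0$ while the two flanking local minima (negative values) increase to $0$, which is perfectly compatible with part (2) of Remarks~\ref{irreversible}. Part (3) of those remarks only forbids real zeros appearing out of complex pairs, and part (4) treats odd multiplicities; neither rules out an even-multiplicity collision of already-real zeros. The paper excludes this with a different tool that your proposal never invokes: the functional equation (\ref{FE}) in the form $\theta (q,x/q^2)=1+x/q+(x^2/q)\theta (q,x)$, evaluated at the zeros. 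Using $x_{4k+2}<-1$ and $x_{4k+3}>1$ this gives $\theta (q,x_{4k+2}/q^2)>0$, $\theta (q,x_{4k+3}/q^2)<0$, and (by continuity from small $|q|$) the interlacing strings $x_{4k+6}<x_{4k+4}/q^2<x_{4k+2}/q^2<x_{4k+4}$ and $x_{4k+5}<x_{4k+3}/q^2<x_{4k+5}/q^2<x_{4k+7}$. These show that the pairs bounding intervals where $\theta$ keeps a fixed sign of the ``wrong'' type cannot coalesce while the neighbouring pair is still real (even if equal), which simultaneously forces every confluence to involve exactly two zeros and identifies which pairs can meet. Your plan contains no substitute for this step.

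Second, the determination of the extremum type is exactly the point you yourself flag as ``the delicate part'' and leave unproved, and the one concrete instance you give is wrong: $x_1$ and $x_3$ bound an interval on which $\theta<0$, so if they coalesced the double zero would be flanked by regions where $\theta>0$, i.e.\ it would be a local \emph{minimum}, not a maximum as you claim; moreover these two zeros never coalesce at all ($x_1$ stays simple near $1$, cf.\ Proposition~\ref{firstzero} and Lemma~\ref{absenceofzeros}): on the positive half-axis it is the pairs $(x_{4k+3},x_{4k+5})$, bounding intervals where $\theta>0$, that merge, producing local maxima, and on the negative half-axis the pairs $(x_{4k+2},x_{4k+4})$, bounding intervals where $\theta<0$, producing local minima. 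In the paper this bookkeeping is not obtained from the $+/-$ chart of Fig.~\ref{parthetatwographs} or from ``closest to $0$'' considerations, but falls out of the same functional-equation inequalities above, after which one reindexes the surviving zeros and repeats. So the overall picture you describe is the right one, but the key mechanism (the (\ref{FE})-based sign and interlacing argument) is missing, and without it both the exclusion of higher multiplicities and the max/min dichotomy remain unestablished.
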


\begin{proof}
Equality (\ref{FE}) implies the following one:
$\theta (q,x/q^2)=1+x/q+(x^2/q)\theta (q,x)$. Set $x=x_{4k+2}$. Hence 
$x_{4k+2}<-1$, see part (2) of Remarks~\ref{zerosoftheta}. As 
$\theta (q,x_{4k+2})=0$ and $1+x_{4k+2}/q>0$, this implies 
$\theta (q,x_{4k+2}/q^2)>0$. In the same way $\theta (q,x_{4k+4}/q^2)>0$.

For $q$ close to $0$ the numbers $x_{4k+2}/q^2$ and $x_{4k+4}/q^2$ are close 
respectively to $x_{4k+4}$ and $x_{4k+6}$, 
see part (1) of Remarks~\ref{zerosoftheta}. 
Hence for such values of $q$ one has 

\begin{equation}\label{equation246}
x_{4k+6}<x_{4k+4}/q^2<x_{4k+2}/q^2<x_{4k+4}~.
\end{equation}
This string of inequalities holds true (by continuity) for  
$q$ belonging to any interval of the form $(a,0)$, $a\in (-1,0)$, 
for any $q$ of which 
the zeros $x_{4k+6}$, $x_{4k+4}$ and $x_{4k+2}$ are real and distinct. 

Equation (\ref{equation246}) implies that $x_{4k+6}$ and $x_{4k+4}$ cannot 
coalesce if $x_{4k+4}$ and $x_{4k+2}$ are real (but not necessarily distinct). 
Hence when for the 
first time negative zeros of $\theta$ coalesce, this happens with exactly two 
zeros, and the double zero is a local minimum of $\theta$. 

For positive zeros one obtains in the same way the string of inequalities   
$$x_{4k+5}<x_{4k+3}/q^2<x_{4k+5}/q^2<x_{4k+7}~.$$
Indeed, one has $1+x_{4k+3}/q<0$ because $x_{4k+3}>1$ (see Proposition~\ref{firstzero}) 
and $q\in (0,1)$. Hence $\theta (q,x_{4k+3}/q^2)<0$ and in the same 
way $\theta (q,x_{4k+5}/q^2)<0$. Thus  
$x_{4k+5}$ and $x_{4k+7}$ cannot 
coalesce if $x_{4k+3}$ and $x_{4k+5}$ are real (but not necessarily distinct). 
Hence when for the 
first time positive zeros of $\theta$ coalesce, this happens with exactly two 
zeros, and the double zero is a local maximum of $\theta$.

After a confluence of zeros takes place, one can give new indices to the 
remaining real zeros so that the indices of consecutive zeros differ by $2$ 
($x_{2s+2}<x_{2s}<0$ and $0<x_{2s+1}<x_{2s+3}$). After this for the next 
confluence the reasoning is the same.
\end{proof}

%\section{$4k+3 - 4k+6$}

\begin{lm}\label{lm4k+3}
For $k\in \mathbb{N}$ sufficiently large one has $x_{4k+3}\prec x_{4k+6}$.
\end{lm}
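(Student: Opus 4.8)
\textbf{Proof proposal for Lemma~\ref{lm4k+3}.}

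The plan is to mimic the argument used in Lemma~\ref{prec24} and in Proposition~\ref{onlydouble}: derive from the functional equation a string of inequalities relating $x_{4k+3}$-shifted quantities to neighbouring positive zeros, so that a coalescence of $x_{4k+3}$ and $x_{4k+5}$ is forced to precede any coalescence of $x_{4k+6}$ and $x_{4k+8}$. First I would iterate equation (\ref{FE}) the appropriate number of times to obtain a representation of the form $\theta (q,x)=P(q,x)+q^{m}x^{\ell}\theta (q,q^{m}x)$ with $P$ a low-degree explicit polynomial; the natural choice here, as in (\ref{fourfold}), is the fourfold iterate, evaluated at a point of the shape $x=x_{4k+3}/q^{4}$ (or a similar power of $q$ matching the index shift from $4k+3$ to $4k+6$). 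Plugging $x=x_{4k+3}/q^{r}$ in and using $\theta(q,x_{4k+3})=0$ collapses the tail term, leaving an explicit finite sum whose sign I must control.

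The sign computation is the crux. I would split the resulting finite sum into groups of consecutive terms (as in Proposition~\ref{onlydouble}, where $(1+x^2/q^5)$ and $(x/q^3+x^3/q^6)$ were each shown negative), using the two facts available for large $k$: that $x_{4k+3}>1$ and in fact $x_{4k+3}\to +\infty$ (Theorem~\ref{newfirst} together with the asymptotics $x_j\approx -q^{-j}$ from Theorem~\ref{fundtm}(1)), and that $q\in(-1,0)$ has a definite sign pattern on the powers $q^r$. The largeness of $k$ is what makes the dominant monomial in $x_{4k+3}$ swamp the lower-order terms, pinning down the sign of $\theta(q,x_{4k+3}/q^r)$; this is precisely why the lemma is only claimed for $k$ sufficiently large, in contrast to Lemma~\ref{prec24}. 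Once the sign is known, say $\theta(q,x_{4k+3}/q^{r})$ has the sign that places $x_{4k+3}/q^{r}$ strictly between two specified consecutive positive zeros, I get the desired interlacing: for $q$ small in modulus the point $x_{4k+3}/q^{r}$ sits in $(x_{4k+6},x_{4k+8})$ (or the analogous window) because $x_j=-q^{-j}(1+O(q))$, and this persists by continuity for all $q\in(a,0)$ for which the relevant zeros are real and distinct.

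From the interlacing I would conclude exactly as in the earlier lemmas: if $x_{4k+3}$ and $x_{4k+5}$ have not yet coalesced (so they are real and distinct), then the inequality string keeps $x_{4k+6}$ and $x_{4k+8}$ strictly separated, hence they cannot have coalesced either; equivalently $x_{4k+3}\prec x_{4k+6}$. I would also invoke Proposition~\ref{finmany} and Proposition~\ref{onlydouble} to guarantee that, in the range of $q$ considered, the only way the pair $\{x_{4k+6},x_{4k+8}\}$ can disappear is through a genuine double zero, so the $\prec$ relation is meaningful.

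\textbf{Main obstacle.} The principal difficulty is the sign analysis of the explicit finite sum obtained after killing the tail: choosing the right iterate of (\ref{FE}) and the right power $q^{r}$ so that the surviving polynomial groups neatly into blocks of a single sign, and then verifying that for $k$ large the leading term dominates uniformly in $q\in(-1,0)$ (or at least on every compact subinterval, which suffices by the continuity argument). Getting the bookkeeping of exponents right — matching the index shift $4k+3\mapsto 4k+6$ to the correct power of $q$ — is the step most likely to require care.
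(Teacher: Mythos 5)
Your overall strategy is the right one and matches the paper's: shift a positive zero by a power of $q$ via an iterate of (\ref{FE}), kill the tail, determine the sign, and conclude by interlacing plus continuity. Two corrections on the bookkeeping you flagged: the index shift $4k+3\mapsto 4k+6$ calls for the \emph{threefold} iterate $\theta (q,x)=1+qx+q^3x^2+q^6x^3\theta (q,q^3x)$ evaluated at $x=x_{4k+3}/q^3$ (not the fourfold one at $x_{4k+3}/q^4$), yielding $\theta (q,x_{4k+3}/q^3)=1+x_{4k+3}/q^2+x_{4k+3}^2/q^3$; and the shifted points land between the \emph{negative} zeros $x_{4k+8}<x_{4k+6}$, not between positive ones.

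The genuine gap is in how you handle ``$k$ sufficiently large.'' The sign of $1+x_{4k+3}/q^2+x_{4k+3}^2/q^3$ is negative as soon as $x_{4k+3}\geq 3$, \emph{uniformly} in $q\in (-1,0)$ --- no asymptotic domination in $k$ is involved at all. What large $k$ must buy is that the zero $x_{4k+3}(q)$ stays $\geq 3$ along the whole relevant range of $q$, and your fallback of verifying things ``on every compact subinterval of $(-1,0)$'' does not suffice: the coalescences whose order the lemma compares occur at spectral values accumulating at $-1$, i.e.\ outside every such compact subinterval, so the interlacing must be controlled up to $q\to -1^+$. The paper closes exactly this point with Katsnelson's limit result (Lemma~\ref{absenceofzeros}): for $q$ close to $-1$ the only real zero in $(-e^{\pi /2}+\varepsilon ,e^{\pi /2}-\varepsilon )$ is the simple one near $1$, so every other real zero, in particular $x_{4k+3}$, satisfies $x_{4k+3}\geq 3$ there; Proposition~\ref{finmany} then handles the intermediate compact range $q\in [a,0)$, where only finitely many zeros ever enter $[-3,3]$ or coalesce, which is precisely why the bound $x_{4k+3}\geq 3$ (and hence the lemma) can be guaranteed only for $k$ large. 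Without invoking this Katsnelson-based lower bound near $q=-1$, your sign and interlacing argument cannot be extended to the values of $q$ where the coalescences actually happen, so the claimed ordering is not established.
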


\begin{proof}
Suppose that $x_{4k+3}\geq 3$. Then 

$$\theta (q,x_{4k+3}/q^3)=1+x_{4k+3}/q^2+x_{4k+3}^2/q^3+(x_{4k+3}^3/q^3)
\theta (q,x_{4k+3})=1+x_{4k+3}/q^2+x_{4k+3}^2/q^3~.$$  
For $x_{4k+3}\geq 3$ and $q\in (-1,0)$ the right-hand side is negative. In 
the same way $\theta (q,x_{4k+5}/q^3)<0$. We prove below that 

\begin{equation}\label{4string}
x_{4k+8}<x_{4k+5}/q^3<x_{4k+3}/q^3<x_{4k+6}~.
\end{equation}
Hence the zeros $x_{4k+8}$ and $x_{4k+6}$ cannot coalesce 
before $x_{4k+5}$ and $x_{4k+3}$ have coalesced. To prove the string of 
inequalities (\ref{4string}) observe that for $q$ close to $0$ the numbers 
$x_{4k+8}$ and $x_{4k+5}/q^3$ are close to one another ($x_{4k+3}/q^3$ and 
$x_{4k+6}$ as well, see part (1) of Remarks~\ref{zerosoftheta}) 
which implies (\ref{4string}). By continuity, as long as 
$x_{4k+3}\geq 3$ and $q\in (-1,0)$, the string of inequalities holds true 
also for $q$ not necessarily close to $0$. 

The result of V. Katsnelson (see part (4) of Remarks~\ref{remsKa}) implies 
that there exists $a\in (-1,0)$ such that for $q\in (-1,a]$ the function 
$\theta (q,x)$ has no zeros in $[-3,3]$ except the one which is simple and 
close to $1$ (see Proposition~\ref{firstzero}). 
Hence for $q\in (-1,a]$ the condition $x_{4k+3}\geq 3$ is 
fulfilled, if the zero $x_{4k+3}$ has been real and simple for 
$q\in (a,0)$. On the other hand for $q\in [a,0)$ only finitely many 
real zeros of $\theta$ have coalesced, and only finitely many 
have belonged to the interval $[-3,3]$ for some value of $q$, see 
Proposition~\ref{finmany}. 
Therefore there exists $k_0\in \mathbb{N}$ such that for $k\geq k_0$ one has 
$x_{4k+3}\geq 3$. 
\end{proof}

\subsection{Completion of the proof of Theorem~\protect\ref{new}}

Proposition~\ref{onlydouble} and Remarks~\ref{irreversible}
show that $\theta (q,.)$ has no real zero 
of multiplicity higher that $2$.  
Lemma~\ref{prec24} implies the string of inequalities 
$-1<\cdots <\bar{q}_{2l+2}<\bar{q}_{2l}<\cdots <0$. For $k$ sufficiently 
large one has $-1<\cdots <\bar{q}_{k+1}<\bar{q}_k<\cdots <0$. This 
follows from Lemmas~\ref{prec} and~\ref{lm4k+3}. It results from 
Proposition~\ref{finmany} and from the above inequalities 
that the set of spectral values has $-1$ as unique accumulation point. 
This proves part (3) of the theorem. 
 
Part (2) of the theorem results from Proposition~\ref{onlydouble}.

Part (4) follows from Remarks~\ref{irreversible}. These remarks show that 
real zeros can be only lost and that no new real zeros are born.

\section{Proof of Theorem~\protect\ref{astm}
\protect\label{secastm}}

Recall  that  
$\psi _1=\theta (v^4,-x^2/v)$ and 
$\psi _2=-vx\theta (v^4,-vx^2)$, see Notation~\ref{psinota}. 
Recall that the spectral values $\tilde{q}_k$ 
of $q$ for $\theta (q,x)$, $q\in (0,1)$ satisfy the asymptotic relation 
$\tilde{q}_k=1-\pi /2k+o(1/k)$. Hence the values of $v$ 
for which the function $\theta (v^4,x)$ has a double zero are of the 
form 

$$\tilde{v}_k=(\tilde{q}_k)^{1/4}=1-\pi /8k+o(1/k)$$ 
and the functions $\psi _{1,2}$ 
have double zeros for $v=\tilde{v}_k$. 

Consider three consecutive values of $k$ the first of which is odd -- 
$k_0$, $k_0+1$ and $k_0+2$. Set  $v:=\tilde{v}_{k_0}$. 
Denote by $a<b<0$ the double 
negative zeros of the functions $\psi _{1,2}|_{v=\tilde{v}_{k_0}}$. 
These zeros are local minima and on 
the whole interval $[a,b]$ one has $\theta >0$. The values of $\theta$ at 
local minima increase 
(see part (2) of Remarks~\ref{irreversible}), 
therefore the double zero of $\theta (\bar{q}_{k_0},.)$ 
is obtained for some  
$|q|<|\tilde{v}_{k_0}|$, 
i.e. before the functions $\psi _{1,2}|_{v=\tilde{v}_{k_0}}$ have double zeros. 
This follows from 
equality (\ref{uq}) in which both summands in the right-hand side have local 
minima (recall that as $k_0$ is odd, 
the double zero of $\theta$ is negative, so $x<0$ in equality (\ref{uq})). 
Hence 

\begin{equation}\label{eq11}
|\bar{q}_{k_0}|<|\tilde{v}_{k_0}|=1-\pi /8k_0+o(1/k_0)~.
\end{equation} 
In the same way 

\begin{equation}\label{eq22}
|\bar{q}_{k_0+2}|<|\tilde{v}_{k_0+2}|=1-\pi /8(k_0+2)+o(1/(k_0+2))~.
\end{equation}
In the case of $k_0+1$ the function $\psi _{1}|_{v=\tilde{v}_{k_0+1}}$ 
has a local minimum while 
$\psi _{2}|_{v=\tilde{v}_{k_0+1}}$ has a local maximum (because $k_0+1$ is even, 
the double zero of $\theta$ is positive, so $x>0$ in equality (\ref{uq})
$x>0$). As $\theta$ has a local maximum 
and as the values of $\theta$ at local maxima decrease 
(see part (2) of Remarks~\ref{irreversible}), 
the double zero of 
$\theta (\bar{q}_{k_0+1},.)$ is obtained for some  
$|q|>|\tilde{v}_{k_0+1}|$, i.e. after the functions 
$\psi _{1,2}|_{v=\tilde{v}_{k_0+1}}$ have double zeros. Therefore 

\begin{equation}\label{eq33}
|\bar{q}_{k_0+1}|>|\tilde{v}_{k_0+1}|=1-\pi /8(k_0+1)+o(1/(k_0+1))~. 
\end{equation}
When $k_0$ is sufficiently large one has 
$|\bar{q}_{k_0}|<|\bar{q}_{k_0+1}|<|\bar{q}_{k_0+2}|$ (this follows from part (3) 
of Theorem~\ref{new}). Using 
equations (\ref{eq22}) and (\ref{eq33}) one gets

$$1-\pi /8(k_0+1)+o(1/(k_0+1))<|\bar{q}_{k_0+1}|<|\bar{q}_{k_0+2}|<
1-\pi /8(k_0+2)+o(1/(k_0+2))~.$$
Hence $|\bar{q}_{k_0+1}|=1-\pi /8(k_0+1)+o(1/(k_0+1))$ and 
$|\bar{q}_{k_0+2}|=1-\pi /8(k_0+2)+o(1/(k_0+2))$. 
This implies the statement of Theorem~\ref{astm}.

\end{document}